\include{birkart.sty}

\documentclass{amsart}[12pt]
\usepackage{amsmath}
\usepackage{amssymb}
\usepackage{amsfonts}
\usepackage{graphicx}
\usepackage{texdraw}
\usepackage{tikz}
\usepackage{enumitem}
\usepackage{graphpap}
\usepackage{wasysym}
\usepackage{color}
\usepackage{pxfonts}
\usepackage[OT2,T1]{fontenc}

\input txdtools

\newtheorem{thm}{Theorem}[section]
\newtheorem{cor}[thm]{Corollary}
\newtheorem{lem}[thm]{Lemma}

\theoremstyle{definition}

\theoremstyle{remark}
\newtheorem{rem}{Remark}

\numberwithin{equation}{section}

\newcommand{\Prob}{{\mathbb{P}}}
\renewcommand{\d}{\partial}
\newcommand{\dbar}{\bar{\partial}}
\newcommand{\calA}{{\mathcal A}}
\newcommand{\calB}{{\mathcal B}}

\newcommand{\calN}{{\mathcal N}}

\newcommand{\calE}{{\mathcal E}}

\newcommand{\calW}{{\mathcal W}}

\newcommand{\bigO}{{\mathcal O}}

\newcommand{\T}{{\mathbb T}}

\newcommand{\1}{{\mathbf{1}}}

\newcommand{\ttn}{{\tt{n}}}
\newcommand{\tts}{{\tt{s}}}

\renewcommand{\Cap}{\operatorname{Cap}}

\newcommand{\R}{{\mathbb R}}
\newcommand{\Z}{{\mathbb Z}}
\newcommand{\E}{{\mathbb E}}

\newcommand{\C}{{\mathbb C}}
\newcommand{\D}{{\mathbb D}}

\newcommand{\Pol}{\operatorname{Pol}}
\newcommand{\supp}{\operatorname{supp}}
\newcommand{\re}{\operatorname{Re}}

\newcommand{\erfc}{\operatorname{erfc}}
\newcommand{\Int}{\operatorname{Int}}
\newcommand{\Ext}{\operatorname{Ext}}

\newcommand{\dist}{\operatorname{dist}}

\newcommand{\He}{\operatorname{He}}

\makeatletter
\newcommand*\bigcdot{\mathpalette\bigcdot@{.5}}
\newcommand*\bigcdot@[2]{\mathbin{\vcenter{\hbox{\scalebox{#2}{$\m@th#1\bullet$}}}}}
\makeatother

\allowdisplaybreaks

\begin{document}

\keywords{Two-dimensional Coulomb gas; Jordan outpost; correlations; general orthogonal polynomials; Szeg\H{o} type asymptotics}

\subjclass[2020]{60B20; 82D10; 41A60; 31C20}

\title[Correlations for two-dimensional outpost ensembles]{Szeg\H{o} type correlations for two-dimensional outpost ensembles}

\begin{abstract} We consider two-dimensional Coulomb systems for which the coincidence set contains an outpost in the form of a suitable Jordan curve. We study asymptotics for correlations along the union of the outpost and the outer boundary of the droplet. These correlations turn out to have a universal character and are given in terms of the reproducing kernel for a certain Hilbert space of analytic functions, generalizing the Szeg\H{o} type edge correlations obtained recently by Ameur and Cronvall. There are several additional results, for example on the effect of insertion of an exterior point charge in the presence of an outpost.
\end{abstract}

\author{Yacin Ameur}
\address{Yacin Ameur\\
Department of Mathematics\\
Lund University\\
22100 Lund, Sweden}
\email{ Yacin.Ameur@math.lu.se}

\author{Ena Jahic}
\address{Ena Jahic\\
Department of Mathematics\\
Lund University\\
22100 Lund, Sweden}
\email{ena.jahic@math.lth.se}

\maketitle

\section{Introduction and main results}
We continue the investigation of two-dimensional outpost ensembles, such as introduced recently in \cite{AC,ACC}. Thus we study Coulomb systems which accumulate in the vicinity of a connected droplet ($S$), but with a number of outliers dispersed along a smooth Jordan curve ($C_2$) in the exterior of the droplet. See Figure \ref{fig1}.

Under suitable conditions the number of outliers (near $C_2$) has an asymptotic Heine distribution; in particular the expected number is strictly positive and finite even in the thermodynamic limit, as the total number of particles increases indefinitely (cf. \cite{AC,ACC}). The statistical behaviour differs starkly from outpost ensembles appearing in the literature on Hermitian random matrices, \cite{BL,Cl,Mo}.

The outpost regime is critical in the following sense. Under Laplacian growth it represents a point at which the droplet changes topology, the outpost being the ``germ'' of a new  ring-shaped component. Drawing on the principle that criticality merits special attention, we shall delve deeper into this particular regime and shed some new light on the associated function theory.

In the spirit of \cite{AC1,ACC,FJ}, we shall study asymptotic long-range correlations along the union of the outpost and the outer edge of the droplet. We will find that these correlations are conveniently expressed in terms of the reproducing kernel of a suitable Hilbert space of analytic functions, providing a natural generalization of the Szeg\H{o} type asymptotics in \cite{AC1}.

We remark that several aspects of the ``post-critical'' regime of disconnected droplets are investigated in \cite{AC,ACC,ACC1}. It is shown in \cite{ACC1} that, assuming radial symmetry, the Szeg\H{o} kernel describing correlations along the edge of a spectral gap oscillates with the number of particles. Intuitively, oscillations in the correlation functions are caused by random \emph{bilateral} displacements of particles back and forth across a spectral gap. In the present case the displacements are unilateral, from $S$ to $C_2$, and this is not enough to cause oscillations.

\subsection{Two-dimensional outpost ensembles} We recall the outpost model from \cite{AC}. While the following conditions may seem restrictive at first glance, we reassure the reader that examples are abundant. See Section \ref{mex} for several related comments and specific examples, cf.~also \cite{AC}.

\subsubsection{Background from potential theory}
We consider Coulomb systems $\{z_j\}_1^n\subset \C$ picked randomly with respect to the Gibbs measure
\begin{align}\label{bogi}d\Prob_n(z_1,\ldots,z_n)=\frac 1 {Z_n}\prod_{1\le i<j\le n}|z_i-z_j|^2\prod_{i=1}^n e^{-nQ(z_i)}\, dA(z_i)\end{align}
where $Q:\C\to\R\cup\{+\infty\}$ is 
a subharmonic function with values in $\R\cap\{+\infty\}$ which is finite on some open set and satisfies
$$\liminf_{|z|\to\infty}\frac {Q(z)}{2\log|z|}>1.$$

Here and throughout we write
$dA(z)=\pi^{-1}d^2 z$ for the normalized area measure on $\C$.

For convenient reference, we now recall some notions and results from weighted potential theory. We refer to \cite{ST} for proofs and further details.

\begin{center}
\begin{figure}[t]
\includegraphics[width=0.6\textwidth]{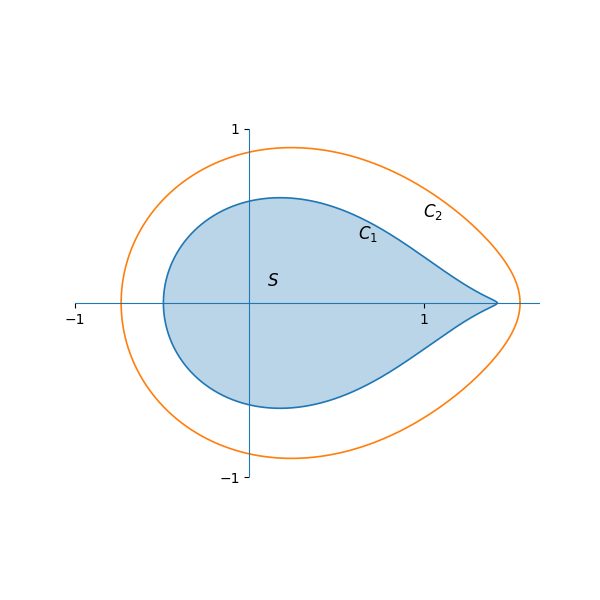}
\caption{The coincidence set $S^*=S\cup C_2$ for the outpost potential described in Section \ref{mex} with $\alpha=-0.5$, $r_1=0.735$ and $r_2=1$.
}
\label{fig1}
\end{figure}
\end{center}

The Frostman equilibrium measure with respect to $Q$ is the unique compactly supported probability measure $\sigma=\sigma[Q]$ on $\C$ which minimizes the weighted energy functional
$$I_Q[\mu]=\iint_{\C^2}\log \frac 1 {|z-w|}\,d\mu(z)d\mu(w)+\int_\C Q\, d\mu$$
over all compactly supported Borel probability measures $\mu$ on $\C$. The support $S=S[Q]=\supp\sigma$ is called the droplet with respect to $Q$. 

Assuming (as we shall) that $Q$ is $C^2$-smooth in a neighbourhood of $S$, then $\sigma$ is absolutely continuous with respect to the normalized area measure $dA(z)=\frac 1 \pi d^2 z$, and in fact (cf.~\cite{ST})
\begin{align}\label{bo}d\sigma(z)=\Delta Q(z)\cdot\1_S(z)\, dA(z).\end{align}

Here and throughout, $\Delta:=\d\dbar=\frac 1 4 (\d_x^2+\d_y^2)$ is the normalized Laplacian, i.e., the standard Laplacian divided by $4$.

Next we define the obstacle function $\check{Q}(z)$ to be the pointwise supremum of $s(z)$, where $s(w)$ ranges over all subharmonic functions $s:\C\to \R$ which satisfy $s(w)\le 2\log|w|+\bigO(1)$ as $w\to\infty$. Then $\check{Q}$ is globally $C^{1,1}$-smooth and $\check{Q}$ is harmonic on $\C\setminus S$, see \cite{ST}. In particular $d\sigma=\Delta \check{Q}\, dA$.

We define the \emph{coincidence set} for the obstacle problem by
$$S^*:=\{Q=\check{Q}\}.$$ 

This set satisfies $S\subset S^*$ (again cf. \cite{ST}). 

While the overwhelming majority of investigations on Coulomb gas ensembles assume that $S=S^*$, we shall here, following \cite{ACC,AC}, investigate an interesting class of potentials for which the ``shallow set'' $S^*\setminus S$ has positive capacity.

\subsubsection{Class of outpost potentials} Given the droplet $S$, we will write $U$ for the connected component of $\hat{\C}\setminus S$ containing infinity. 

We shall assume that the boundary $\d U$ is a single, regular Jordan curve $C_1$. We shall also assume that $(S^*\setminus S)\cap U$ is a Jordan curve $C_2$ and that $Q(z)$ is real-analytic and strictly subharmonic in a neighbourhood of $C_1\cup C_2$.

Let $\D_*=\{|z|>1\}\cup\infty$ and let $\phi_1:U\to\D_*$ be the conformal mapping of the form
\begin{equation}\phi_1(z)=\frac 1 {r_1}z+a_{0,1}+a_{1,1}\frac 1 z+\cdots\end{equation}
for $z$ in a neighbourhood of $\infty$, where $r_1=\Cap C_1>0$. (See Figure \ref{fig2}.)

Following \cite{AC}, we impose two assumptions, or \textit{compatibility conditions}, restricting the class of curves $C_2$ and the values of the Laplacian $\Delta Q$ along $C_1\cup C_2$.

The first compatibility condition is that $\phi_1$ should map the outpost $C_2$ to a circle centered at the origin, of radius $r_2/r_1$ where $r_2>r_1$ is arbitrary but fixed. The normalized conformal map $\phi_2:\Ext C_2\to\D_*$ is then simply
$\phi_2=\frac {r_1}{r_2}\phi_1$ and $r_2=\Cap C_2$.

\begin{center}
\begin{figure}
\includegraphics[width=\linewidth]{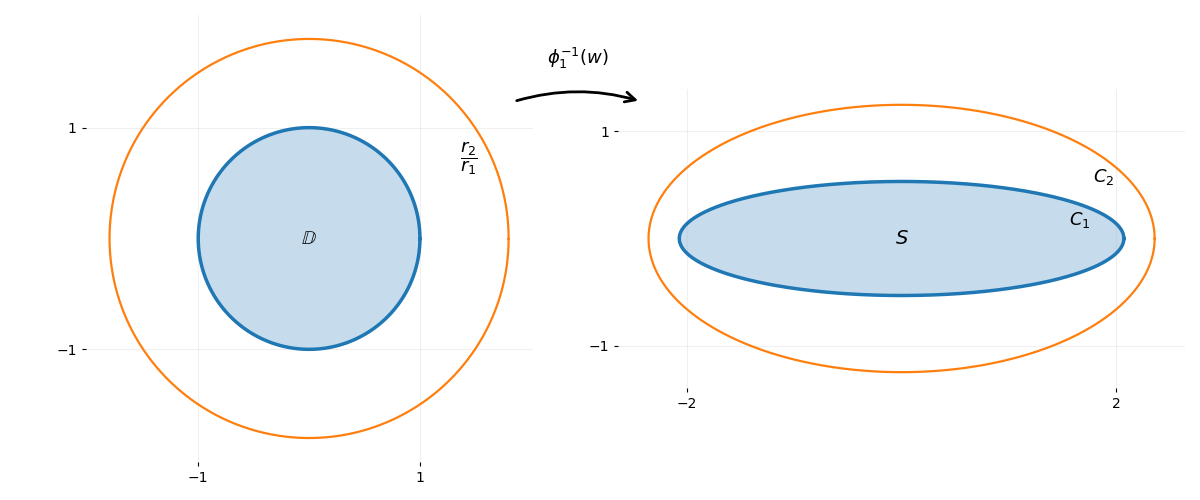}
\caption{Outpost potential modeled on an elliptic Ginibre potential, cf.~Section \ref{mex}} 
\label{fig2}
\end{figure}
\end{center}

Next consider the ring-shaped domain $G=(\Ext C_1)\cap (\Int C_2)$. Let
$$\varpi(z)=\frac {\log|\phi_1(z)|}{\log(r_2/r_1)}$$
be the harmonic function which is $0$ along $C_1$ and $1$ along $C_2$. Also let $H(z)$ be the harmonic real-valued function on $G$ with boundary values $H(z)=\frac 1 2 \log \Delta Q(z)$ on $C_1\cup C_2$.

Standard results about the Dirichlet problem (e.g.~\cite{Bell}) imply that there exists a holomorphic function $h_1$ on $G$ and a real constant $c$ such that
\begin{equation}\label{constc}H(z)=\re h_1(z)+c\varpi(z),\qquad (z\in G).\end{equation}

The  
second compatibility condition is that $h_1(z)$ should extend to a holomorphic function on $U$. We fix this function uniquely by requiring that $h_1(\infty)$ is a real number.

\subsubsection{The correlation kernel} Consider the point process $\{z_j\}_1^n$ picked randomly with respect to \eqref{bogi}.

For $k\le n$, the $k$-point correlation function $R_{n,k}$ is defined by the requirement that, for all bounded continuous functions $f$ on $\C^k$,
$$\E_n(f(z_1,\ldots,z_k))=\frac {(n-k)!}{n!}\int_{\C^k}f\cdot R_{n,k}\, dA^{\otimes k}.$$

The process $\{z_j\}_1^n$ is determinantal, i.e., there exists a kernel $K_n(z,w)$ such that $R_{n,k}(w_1,\ldots,w_k)=\det(K_n(w_i.w_j))_{i,j=1}^k$.

Indeed, a correlation kernel $K_n(z,w)$ for the process $\{z_j\}_1^n$ can be constructed in the following way.

Consider the space of weighted polynomials
$$\calW_n=\big\{f(z)=p(z)\cdot e^{-\frac 1 2 nQ(z)}\,;\, p\in \Pol(n-1)\big\}$$
where $\Pol(n-1)$ is the set of holomorphic polynomials of degree at most $n-1$. We equip $\calW_n$ with the norm inherited from $L^2(\C;dA)$.

The reproducing kernel of the space $\calW_n$ is then a correlation kernel for $\{z_j\}_1^n$; in the following this canonical correlation kernel is always chosen. We can write
\begin{align}\label{ck}K_n(z,w)=\sum_{j=0}^{n-1}e_{j,n}(z)\overline{e_{j,n}(w)}\end{align}
where the wavefunction
\begin{align}\label{wavefunction}e_{j,n}(z)=\gamma_{j,n}p_{j,n}(z)e^{-\frac 1 2 nQ(z)}\end{align}
is normalized by $p_{j,n}(z)=z^j+a_{j-1,n}z^{j-1}+\cdots +a_{0,n}$ the $j$:th monic orthogonal polynomial with respect to the norm
$$\|p\|_{nQ}^2:=\int_\C |p|^2e^{-nQ}\, dA$$
and $\gamma_{j,n}=1/\|p\|_{nQ}$.

By a \textit{cocycle} we mean a function $c(z,w)$ of the form $c(z,w)=a(z)\overline{a(w)}$, where $a(z)$ is a continuous unimodular function. Then, for any kernel $K(z,w)$ we have $\det(K(z_i,z_j)c(z_i,z_j))_{i,j=1}^n=\det(K(z_i,z_j))_{i,j=1}^n$. I.e., the kernels $K$ and $cK$ give rise to the same determinantal process.

Given these proviso, we consider the
reproducing kernel $K_n(z,w)$ for the space $\calW_n$. We shall derive asymptotics as $n\to\infty$ for $c_n(z,w)K_n(z,w)$ in cases when $z,w$ are in a vicinity of $C_1\cup C_2$, where $c_n(z,w)$ are suitable cocycles. These correlations are expressed in terms of the reproducing kernel of a Hilbert space of analytic functions on $U$, which we now turn to.

\subsection{Associated Szeg\H{o} kernels and the Heine distribution}
For a function $f(z)$ analytic in $U$, having appropriate boundary values along $C_1$, we define a squared norm by
\begin{align*}\label{ha1}\|f\|_1^2=\oint_{C_1}|f(z)|^2\frac {|dz|}{\sqrt{\Delta Q(z)}}.
\end{align*} 
(Here and throughout, $|dz|$ denotes the usual arclength measure.)

As above, let $h_1(z)$ be the analytic function in the exterior $\Ext C_1$ (the connected component in $\hat{\C}$ containing $\infty$) which is real at $\infty$ and solves the boundary value problem
$$\re h_1=\frac 1 2\log \Delta Q,\qquad (\text{along}\quad C_1).$$

Let $H_1$ be the Hilbert space of all analytic functions $f:U\to\C$ such that $\|f\|_1<\infty$ and $f(\infty)=0$.
This is a weighted Hardy space of the type considered in \cite{AC1,ACC1}. We denote by
\begin{equation}\label{s1def}S_1(z,w):=\frac 1 {2\pi}\frac {\sqrt{\phi_1'(z)}\overline{\sqrt{\phi_1'(w)}}e^{\frac 1 2(h_1(z)+\overline{h_1(w)})}}{1-\phi_1(z)\overline{\phi_1(w)}}\end{equation}
the reproducing kernel for the space $H_1$. (Here and throughout, $\sqrt{\phi_1'(z)}$ denotes the branch of the square root which is real at infinity.)

In addition, we shall consider the following equivalent norm on $H_1$
\begin{align}\label{sqn}\|f\|_{1,2}^2:=\oint_{C_1}|f(z)|^2\frac {|dz|}{\sqrt{\Delta Q(z)}}+\oint_{C_2}|f(z)|^2\frac {|dz|}{\sqrt{\Delta Q(z)}}.\end{align}

We denote by $H_{1,2}$ the Hilbert space of analytic functions on $U$ normed by $\|\cdot\|_{1,2}$ and by
$S_{1,2}(z,w)$ the reproducing kernel for $H_{1,2}$; as we shall see below this kernel has several useful representations. 

\begin{lem} \label{boll} When $z,w\in\Ext C_1$ we have the representation
\begin{align}\label{grund}S_{1,2}(z,w)=\sqrt{\phi_1'(z)}\overline{\sqrt{\phi_1'(w)}}e^{\frac 1 2(h_1(z)+\overline{h_1(w)})}\frac 1 {2\pi}
\sum_{j=1}^\infty\frac 1 {(\phi_1(z)\overline{\phi_1(w)})^j}\frac {r_1^{1-2j}}{r_1^{1-2j}+e^{-c}r_2^{1-2j}}.
\end{align}
\end{lem}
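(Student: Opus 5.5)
Our plan is to transplant everything to the exterior of the unit disc through $\phi_1$, where the norm $\|\cdot\|_{1,2}$ becomes diagonal in the monomials $\zeta^{-j}$. Given $f\in H_{1,2}$, we write
$$f(z)=\sqrt{\phi_1'(z)}\,e^{\frac12 h_1(z)}\,g(\phi_1(z)),$$
where $g$ is analytic in $\D_*$ and $g(\infty)=0$. Thus $g(\zeta)=\sum_{j\ge1}b_j\zeta^{-j}$. On $C_1$ we have $|e^{h_1}|=\sqrt{\Delta Q}$ by the definition of $h_1$, and $|\phi_1'(z)||dz|=|d\zeta|$ with $|\zeta|=1$. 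Hence
$$\oint_{C_1}|f|^2\frac{|dz|}{\sqrt{\Delta Q}}=\oint_{|\zeta|=1}|g|^2|d\zeta|=2\pi\sum_j|b_j|^2.$$
This is the computation behind \eqref{s1def}, and it is consistent with that formula.

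For the term on $C_2$ we use the first compatibility condition, $\phi_1(C_2)=\{|\zeta|=r_2/r_1\}$. We also need $|e^{h_1}|$ on $C_2$, and this is where the second compatibility condition \eqref{constc} enters. On $C_2$ we have $\varpi=1$ and $H=\frac12\log\Delta Q$, so $\re h_1=\frac12\log\Delta Q-c$ there. Consequently $|e^{h_1}|/\sqrt{\Delta Q}=e^{-c}$ on $C_2$. Here we must check that the $h_1$ extended holomorphically to $U$ from \eqref{constc} coincides with the $h_1$ solving the boundary problem on $C_1$ alone. It does: on $C_1$ we have $\varpi=0$, so $\re h_1=\frac12\log\Delta Q$ there, and the normalization $h_1(\infty)\in\R$ fixes the imaginary constant. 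With $\rho=r_2/r_1$, the $C_2$ term becomes
$$e^{-c}\oint_{|\zeta|=\rho}|g|^2|d\zeta|=2\pi e^{-c}\sum_j|b_j|^2\rho^{1-2j}.$$
Because the monomials are orthogonal on circles, the total norm is
$$\|f\|_{1,2}^2=2\pi\sum_j|b_j|^2\big(1+e^{-c}\rho^{1-2j}\big).$$

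The reproducing kernel of a weighted $\ell^2$ space with orthogonal basis $\zeta^{-j}$ is $\sum_j \zeta^{-j}\bar\eta^{-j}/\|\zeta^{-j}\|^2$. Transferring back through the unitary map $g\mapsto f$, we get
$$S_{1,2}(z,w)=\sqrt{\phi_1'(z)}\overline{\sqrt{\phi_1'(w)}}e^{\frac12(h_1(z)+\overline{h_1(w)})}\frac1{2\pi}\sum_j\frac{(\phi_1(z)\overline{\phi_1(w)})^{-j}}{1+e^{-c}(r_2/r_1)^{1-2j}}.$$
Multiplying the numerator and denominator of each coefficient by $r_1^{1-2j}$ gives exactly the factor $\frac{r_1^{1-2j}}{r_1^{1-2j}+e^{-c}r_2^{1-2j}}$ in \eqref{grund}. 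Since the coefficients are bounded by $1$, the series converges for $|\phi_1(z)\overline{\phi_1(w)}|>1$, which holds on $\Ext C_1$.

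The main technical obstacle is justifying that $H_{1,2}$ is exactly the image of this weighted sequence space. We need two facts: the boundary values on $C_1$ exist in the $L^2$ sense, and the decomposition $f=\sqrt{\phi_1'}e^{h_1/2}g\circ\phi_1$ is legitimate. For the second, $\sqrt{\phi_1'}$ and $e^{h_1/2}$ must be nonvanishing, single-valued, and bounded above and below near $C_1\cup C_2$. Single-valuedness holds because $\phi_1'\neq0$ on the simply connected domain $U\cup\{\infty\}$, where the branch is real at $\infty$. Boundedness follows from the regularity of $C_1$ together with the real-analyticity and strict subharmonicity of $Q$ near $C_1\cup C_2$. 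With these in hand, $g$ lies in the Hardy space $H^2(\D_*)$ vanishing at $\infty$, and the coefficient computation is rigorous. As a byproduct, the weights $1+e^{-c}\rho^{1-2j}\in[1,1+e^{-c}\rho^{-1}]$ show directly that $\|\cdot\|_{1,2}$ is equivalent to $\|\cdot\|_1$.
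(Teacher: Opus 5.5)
Your proof is correct and is essentially the paper's argument. Transplanting by $\phi_1$ amounts to using the orthogonal basis $f_{j,1}=\sqrt{r_1\phi_1'}\,(r_1\phi_1)^{-j}e^{\frac12 h_1}$, computing its $\|\cdot\|_{1,2}$-norms on $C_1$ and $C_2$, and summing $f_{j,1}(z)\overline{f_{j,1}(w)}/\|f_{j,1}\|_{1,2}^2$, and your remarks on unitarity and norm equivalence supply the completeness that the paper leaves implicit. Your identification $\re h_1=\frac12\log\Delta Q-c$ on $C_2$ gives the factor $e^{-c}$ exactly as in \eqref{grund}, whereas the paper's displayed computation in Section~\ref{sec1} writes $e^{c}$ there, and hence also in \eqref{11form}; that is a sign slip, inconsistent with \eqref{constc} and \eqref{brel}.
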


A proof is given in Section \ref{sec1}. 

\smallskip

It is important to note that $S_{1,2}(z,w)$ continues meromorphically, as a function of $(z,\bar{w})$, to a larger domain. 

Namely, let $D$ be any domain in $\hat{\C}$ containing $C_1\cup \Ext C_1$ such that $\phi_1$ continues analytically to $D$ and satisfies $|\phi_1|>r_1/r_2$ there. We also assume that $h_1(z)$ and $q_1(z)$ have analytic continuations to $D$.

It follows easily from Lemma \ref{boll} that (cf.~Section \ref{sec1} for details)
\begin{align}\label{extend}S_{1,2}&(z,w)=S_1(z,w)\\
&+\sqrt{\phi_1'(z)}\overline{\sqrt{\phi_1'(w)}}e^{\frac 1 2(h_1(z)+\overline{h_1(w)})}\frac 1 {2\pi}
\sum_{j=1}^\infty\frac 1 {(\phi_1(z)\overline{\phi_1(w)})^j}\frac {e^{-c}r_2^{1-2j}}{r_1^{1-2j}+e^{-c}r_2^{1-2j}},\nonumber
\end{align}
where $S_1(z,w)$ is the Szeg\H{o} kernel in \eqref{s1def}. Note that the sum in the second line of \eqref{extend} is analytic in
$(z,\bar{w})$ for all $(z,w)\in D\times D$, so the formula provides a sesqui-meromorphic continuation.

We proceed to define some further objects required in order to formulate our main results.

\smallskip

Let $q_1(z)$ be the analytic function in $\Ext C_1$ solving the boundary value problem $\re q_1= Q$ on $C_1$ (and having $q_1(\infty)$ real). Likewise, let $q_2(z)$ be the analytic function on $\Ext C_1$ such that $\re q_2= Q$ along $C_2$. 

In \cite[Lemma 2.1]{AC} it is shown that $\phi_1(z)e^{\frac 1 2 q_1(z)}=\phi_2(z)e^{\frac 1 2 q_2(z)}$ on $\Ext C_2$. We can thus unambiguously define an analytic function by
\begin{align}\label{udef}u(z):=\phi_1(z)e^{\frac 1 2 q_1(z)}=\phi_2(z)e^{\frac 1 2 q_2(z)}
\end{align}
for $z$ in the domain $D$ above. Closely related is the following identity for the obstacle function
\begin{equation}\label{fundid}\check{Q}(z)=2\log|\phi_1(z)|+\re q_1(z),\qquad(z\in \Ext C_1).\end{equation}

\smallskip

We next recall a few facts concerning the Heine distribution.

A discrete random variable $X$, taking values in $\Z_+=\{0,1,2,\ldots\}$, is said to have a Heine distribution with parameters $\theta>0$ and $0<q<1$ if 
\begin{align}\Prob(\{X=k\})=\frac 1 {(-\theta;q)_\infty}\frac {q^{\frac 1 2 k(k-1)}\theta^k}{(q;q)_k},\qquad k\ge 0,
\end{align}
where the $q$-Pochhammer symbols are defined by
$$(z;q)_k=\prod_{j=0}^{k-1}(1-zq^j),\qquad (z;q)_\infty=\prod_{j=0}^\infty (1-zq^j).$$

The fact that this defines a probability distribution on $\Z_+$ follows by the $q$-binomial theorem, see \cite{ACC} and references therein; we denote this distribution by $\He(\theta,q)$.

For convenient reference, we recall the following result from \cite[Theorem 1.2]{AC} (cf.~\cite[Corollary 1.10]{ACC}).

\begin{thm}[\cite{AC}, Theorem 1.2]\label{compa} Let $\calN$ be a fixed neighbourhood of $C_2$, small enough that its closure does not intersect the droplet $S$. Let $\{z_j\}_1^n$ be a random sample from \eqref{bogi}, and define $X_n$ to be the number of $j$'s between $1$ and $n$ such that $z_j\in \calN$. Then $X_n$ converges in distribution, as $n\to\infty$, to a Heine distribution $X\sim\He(\theta,q)$ with parameters
\begin{equation}\theta=\frac {r_1}{r_2}e^{-c},\qquad q=\bigg(\frac {r_1}{r_2}\bigg)^2.\end{equation}
The expectation of this Heine distribution is given by
\begin{equation}\label{expheine}\mu:=\E X=\sum_{j=1}^\infty\frac {r_2^{1-2j}}{r_1^{1-2j}e^c+r_2^{1-2j}}.\end{equation}
\end{thm}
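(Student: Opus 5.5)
The plan is to show that $X_n$ is asymptotically a sum of independent Bernoulli variables and to read off their success probabilities from Szeg\H{o} type asymptotics of the orthogonal polynomials $p_{n-j,n}$ for fixed $j$.

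\textbf{Reduction to the kernel.} Since the process is determinantal with kernel $K_n$, the generating function of $X_n$ is the Fredholm determinant
$$\E_n\big(t^{X_n}\big)=\det\big(I+(t-1)K_n\1_\calN\big).$$
The operator $K_n\1_\calN$ is the compression of the projection onto $\calW_n$ to $L^2(\calN)$. It therefore suffices to show that its eigenvalues converge, with summable tails, to
$$\lambda_j=\frac{e^{-c}r_2^{1-2j}}{r_1^{1-2j}+e^{-c}r_2^{1-2j}}=\frac{\theta q^{j-1}}{1+\theta q^{j-1}},\qquad j\ge 1.$$
Granted this, $X_n$ converges in distribution to $\sum_j B_j$ with $B_j\sim$ Bernoulli$(\lambda_j)$ independent. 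Its generating function is
$$\prod_{j\ge1}\frac{1+t\theta q^{j-1}}{1+\theta q^{j-1}}=\frac{(-t\theta;q)_\infty}{(-\theta;q)_\infty},$$
and Euler's $q$-binomial identity $(-t\theta;q)_\infty=\sum_k q^{k(k-1)/2}(t\theta)^k/(q;q)_k$ identifies this as $\He(\theta,q)$. The expectation is $\sum_j\lambda_j$, which is \eqref{expheine} after multiplying numerator and denominator by $e^c$.

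\textbf{Asymptotics of wavefunctions near the outpost.} The core step is to build, for each fixed $j\ge1$, an approximation to $p_{n-j,n}$ in the exterior of $S$. The guess is
$$p_{n-j,n}(z)e^{-\frac12 nQ(z)}\approx C_{j,n}\,u(z)^n\phi_1(z)^{-j}\sqrt{\phi_1'(z)}\,e^{\frac12 h_1(z)}e^{-\frac12 nQ(z)},$$
using $u$ from \eqref{udef}. By \eqref{fundid}, the modulus is governed by $e^{-\frac n2(Q-\check Q)}$, so it is concentrated in Gaussian layers of width $n^{-1/2}$ around both $C_1$ and $C_2$. Integrating the Gaussian transversally over each curve gives a weight proportional to $|dz|/\sqrt{\Delta Q}$ times $|\phi_1|^{-2j}|\phi_1'|e^{\re h_1}$. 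On $C_1$ we have $|\phi_1|=1$; on $C_2$ we have $|\phi_1|=r_2/r_1$, and $\re h_1=\frac12\log\Delta Q-c$ there by \eqref{constc}. Hence the squared mass on $C_2$ divided by the total squared mass tends to exactly $\lambda_j$. This is the same computation that produces the weights in Lemma \ref{boll}.

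For $j\ge n^{\epsilon}$ or so (and for the bulk indices), the contribution of $e_{n-j,n}$ to $\calN$ should be $\bigO(q^{j})$ or exponentially small. That gives the summable tail and shows that $\operatorname{tr}(K_n\1_\calN)$ converges to $\sum_j\lambda_j$.

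\textbf{Main obstacle.} The hardest part is justifying the approximation of the true orthogonal polynomials: proving that the quasi-polynomials above, after truncation to $\Pol(n-1)$ with $\bar\partial$-corrections in a Hörmander $L^2$ argument, are nearly orthogonal to each other and to all lower-degree polynomials, uniformly in $j\le M$. One must also control the error on both curves simultaneously, since $\check Q=Q$ on $C_2$ and there is no spectral gap forcing decay there. I would first handle this via the variational characterization of $p_{n-j,n}$ (minimal norm among monic polynomials orthogonal to higher quasi-polynomials), comparing with the weighted Hardy norm $\|\cdot\|_{1,2}$ in \eqref{sqn}.

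Once the first $M$ wavefunctions are controlled, the restricted Gram matrix $\big(\langle e_{n-i,n}\1_\calN,e_{n-j,n}\rangle\big)_{i,j\le M}$ is nearly diagonal with entries $\lambda_j$, because the phases $\phi_1^{-i}\overline{\phi_1^{-j}}$ integrate to zero over the circle $\phi_1(C_2)$. Combined with the tail bound, this yields the eigenvalue convergence needed above.
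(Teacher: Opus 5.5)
Your outline is correct, but it does not follow the paper, because the paper contains no proof of this statement. It is quoted from \cite[Theorem 1.2]{AC}. The ``alternative proof'' announced after it is Corollary \ref{chuck}, which recovers only the mean, $\E_n[Y_n]=\mu\,(1+\bigO(n^{-\beta}))$, not the law of $X_n$.

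The paper reaches $\mu$ by summing all wavefunctions into the kernel first. The edge-regime quasi-polynomials $E_{j,n}$ produce $S_1$. The bifurcation-regime corrections built from the $F_{j,n}$ of Lemma \ref{usel} turn $S_1$ into $S_{1,2}$ (Lemma \ref{bok}, \eqref{extend}). Lemma \ref{usel} is the very approximation you single out as the main obstacle, and the paper takes it from \cite[Theorem 3.5]{AC}. The resulting diagonal asymptotics on $C_2$ (Theorem \ref{mcor1}) are then integrated over $\calB_2$.

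Your $\lambda_j$ are exactly the coefficients of the second sum in \eqref{extend}, the same ratio of $C_2$-mass to total mass that you compute. So the paper is in effect computing $\int_{\calB_2}K_n(z,z)\,dA(z)$, the trace of your compressed operator with $\calB_2$ in place of $\calN$, and it obtains $\sum_j\lambda_j=\mu$.

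A one-point computation of this kind cannot detect the distribution. What your approach adds is control of the off-diagonal Gram entries: the phases $\phi_1^{-i}\overline{\phi_1}^{\,-k}$ cancel on the circle $\phi_1(C_2)$. This gives the independent-Bernoulli structure and, via Euler's $q$-binomial identity, the full Heine law. In exchange, the kernel route gives an explicit rate and uniform two-point Szeg\H{o}-type asymptotics, which the paper reuses for the Berezin measures; the eigenvalue route gives neither.

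One step of your sketch needs tightening: the tail. Controlling $e_{n-j,n}$ only for fixed $j\le M$ does not give a summable tail. Generic pointwise bounds for weighted polynomials lose a factor of order $\sqrt n$ near $C_2$, so they yield only $\int_\calN|e_{n-j,n}|^2\,dA\lesssim\sqrt n\,q^{j}$. That disposes of the indices $j\gtrsim\log n$ but not the range $M<j\lesssim\log n$. You can close this in either of two ways.

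\begin{enumerate}
\item Use the approximation $e_{n-j,n}\approx F_{n-j,n}$ uniformly over the whole bifurcation range $1\le j\le\log^2 n$, which is exactly what Lemma \ref{usel} provides. It gives $\int_\calN|e_{n-j,n}|^2\,dA\le 2\lambda_j(1+o(1))+\bigO(n^{-1/2}\log n)$ on that range. Combine this with the negligibility of the lower indices near $C_2$ (\cite[Lemma 3.7]{AC} and Lemma \ref{zoff}).
\item Import the trace asymptotics from Corollary \ref{chuck}, using Corollary \ref{qm} to pass from $\calB_2$ to $\calN$. Then the complementary diagonal block of the Gram matrix has trace close to $\sum_{j>M}\lambda_j$. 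Positivity bounds the off-diagonal block by the square root of that trace, and Weyl's inequality gives convergence of all eigenvalues.
\end{enumerate}
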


We will not use Theorem \ref{compa} in what follows; in fact, we shall give an alternative proof which implies a slightly better convergence.

\subsection{Main results on kernel asymptotics}

For $k=1,2$ we denote the ``belt'' about $C_k$ by
\begin{equation}\label{2cho}\calB_k:=\bigg\{z\,;\, \dist(z,C_k)\le M\sqrt{\frac{\log n}n}\bigg\}\end{equation}
where $M$ is a fixed, large enough constant.

We have the following result.

\begin{thm} \label{mth0} Suppose that 
$(z,w)$ satisfies
\begin{align}\label{uni1}z,w\in\calB_1\cup \Ext C_1\qquad \text{and}\qquad |\phi_1(z)\overline{\phi_1(w)}-1|\ge \eta,\end{align}
where $\eta>0$ is arbitrary but fixed. Also let $\beta>0$ be an arbitrary but fixed constant in the range $0<\beta<\frac 1 4$.
We then have the convergence (with $u(z)$ given by \eqref{udef})
\begin{align}\label{conv1}K_n(z,w)&=\sqrt{2\pi n}
\cdot (u(z)\overline{u(w)})^n e^{-\frac n 2(Q(z)+Q(w))}\cdot S_{1,2}(z,w)\cdot (1+\bigO(n^{-\beta})),\qquad n\to\infty,
\end{align}
where the implied constant is uniform in $(z,w)$ satisfying \eqref{uni1}. 
\end{thm}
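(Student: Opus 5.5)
We propose to prove Theorem \ref{mth0} from the expansion \eqref{ck}: we need asymptotics for the orthonormal wavefunctions $e_{j,n}$ with $j=n-k$ close to $n$. Then we sum them, and compare the sum term by term with the series representation \eqref{grund} of $S_{1,2}$.

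\textbf{Step 1: asymptotics of the top wavefunctions.} Fix $k\ge 1$ and write $j=n-k$. Following the quasipolynomial/Hilbert-space method of \cite{AC1,AC}, we build an approximate orthonormal function
$$F_{j,n}(z)=(2\pi n)^{1/4}\,\sqrt{\phi_1'(z)}\,e^{\frac12 h_1(z)}\,\phi_1(z)^{-k}\,u(z)^{n}\,e^{-\frac n2 Q(z)}\cdot \lambda_k,$$
valid on $\Ext C_1$ and continued slightly inside $C_1$. The factor $u^n e^{-nQ/2}$ has modulus $e^{\frac n2(\check Q-Q)}$ by \eqref{fundid}. This quantity equals $1$ on $C_1\cup C_2$ and is Gaussian-decaying across both curves.

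A Laplace-method computation in the normal direction gives the squared norm. Each curve contributes $\oint |\cdot|^2|dz|/\sqrt{\Delta Q}$, times $\sqrt{\pi/2n}$ up to constants. The $C_2$-contribution carries the extra factor $|\phi_1|^{-2k}=(r_1/r_2)^{2k}$ together with the weight $e^{-c}$, which comes from \eqref{constc} and the fact that $\re h_1=H-c\varpi$ on $C_2$. Choosing $\lambda_k$ to normalize therefore gives
$$|\lambda_k|^{-2}\propto r_1^{1-2(k)}\big(r_1^{1-2k}+e^{-c}r_2^{1-2k}\big)^{-1}\cdot\text{const},$$
matching the coefficients in \eqref{grund}.

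We then prove $e_{n-k,n}=F_{n-k,n}(1+\bigO(n^{-\beta}))$ uniformly on $\calB_1\cup\Ext C_1$ for $k\le \log n$ (say). The argument has two parts:
\begin{enumerate}
\item[(i)] $F$ is nearly orthogonal to $\calW_{n-k}$. We check this by a $\dbar$-correction with H\"ormander's $L^2$ estimate, using the cut-off near $C_1$, where the approximation is only local.
\item[(ii)] Standard uniqueness of the $j$th orthonormal element yields the pointwise bound, via subharmonicity/Bergman-type $L^2\to L^\infty$ estimates in the belts.
\end{enumerate}

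\textbf{Step 2: summation.} Insert the asymptotics into \eqref{ck}. The terms with $k\le N_n\asymp \log n$ sum to
$$\sqrt{2\pi n}\,(u(z)\overline{u(w)})^n e^{-\frac n2(Q(z)+Q(w))}$$
times the truncated series in \eqref{grund}. The hypothesis $|\phi_1(z)\overline{\phi_1(w)}-1|\ge\eta$ together with $|\phi_1|\ge 1-\bigO(\sqrt{\log n/n})$ on the belt ensures two things: the geometric tail of \eqref{grund} beyond $N_n$ is negligible, and $S_{1,2}(z,w)$ is bounded below in a way that lets the additive errors become relative errors. If $|\phi_1(z)\overline{\phi_1(w)}|$ is large, we factor out the leading term $j=1$ instead.

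The remaining terms, with $k>N_n$, are controlled by the pointwise bound $|e_{j,n}(z)|^2\lesssim n\, e^{n(\check Q_{j}-Q)}$ where $\check Q_j$ is the obstacle function at level $j/n$. This bound makes their contribution $\bigO(n^{-\infty})$ relative to the main term, because the points lie at distance $\bigO(\sqrt{\log n/n})$ from $C_1$, choosing $M$ large.

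\textbf{Main obstacle.} We expect the hardest part to be the uniform error $n^{-\beta}$ with $\beta<1/4$ in Step 1, near $C_1$. There the weight $\sqrt{\Delta Q}$ enters only through boundary layers of width $n^{-1/2}$, and the $\dbar$-correction loses a power. Our first attempt would be to take the approximant with one Gaussian-scale correction and to accept the $n^{-1/4}$-type loss coming from the $L^2\to L^\infty$ passage over discs of radius $n^{-1/2}$ (as in \cite{AC1}). This is the source of the restriction $\beta<\frac14$.
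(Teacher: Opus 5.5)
\textbf{There is a genuine gap in Step 2.} Your Step 1 approximant is essentially the paper's bifurcation-regime function $F_{j,n}$ (Lemma \ref{usel}, taken from \cite{AC}). The problem is the claim that the indices $j=n-k$ with $k>N_n\asymp\log n$ contribute only $\bigO(n^{-\infty})$ relative to the main term. This is false whenever both $z$ and $w$ lie in $\calB_1$, which is the central case of the theorem (e.g.\ $z=p$, $w=q$ distinct points of $C_1$).

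For $k\lesssim\sqrt n$ the boundary of $S_{j/n}$ lies within distance $\bigO(k/n)$ of $C_1$. Hence $n(Q-\check{Q}_{j/n})=\bigO(k^2/n)=\bigO(1)$ on $\calB_1$, and your pointwise bound gives no decay there. In fact each of the roughly $\sqrt n$ terms $e_{j,n}(z)\overline{e_{j,n}(w)}$ with $\log n<k\lesssim\sqrt n$ has modulus of order $\sqrt n$, the same size as the whole right-hand side of \eqref{conv1}. As a sanity check, $K_n(z,z)\asymp n$ near $C_1$ (cf.\ \eqref{chod}), which would be impossible if only $\bigO(\log n)$ terms of size $\sqrt n$ mattered. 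Enlarging $M$ does not help; it only widens the belt. Off the diagonal these terms are not small; they cancel through the oscillation of $(\phi_1(z)\overline{\phi_1(w)})^{-k}$, so no absolute-value bound can discard them.

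The same mistake appears on the series side. Put $x=\phi_1(z)\overline{\phi_1(w)}$ with $|x|\approx 1$. The coefficients $r_1^{1-2j}/(r_1^{1-2j}+e^{-c}r_2^{1-2j})$ in \eqref{grund} tend to $1$, so the tail beyond $N_n$ is approximately $x^{-N_n}/(x-1)$. Its modulus is about $1/|x-1|$: bounded by $1/\eta$, but not small. For points of $\calB_1$ inside $C_1$ the series does not even converge, and $S_{1,2}$ is defined there only through \eqref{extend}. The hypothesis $|x-1|\ge\eta$ controls the continuation of the full series ($\sum_j x^{-j}=1/(x-1)$), not its tail. Your truncation is legitimate only when $|x|\ge 1+\delta$, e.g.\ when one of the points is near $C_2$.

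\textbf{What is missing is a treatment of the whole edge regime} $n-C\sqrt{n\log n}\le j\le n-1$. There the right approximants are not built from the fixed map $\phi_1$. They are the quasi-polynomials $E_{j,n}$ of \eqref{qp1}, built from the moving droplets $S_\tau$, $\tau=j/n$, via $\phi_\tau,q_\tau,h_\tau$. For $k\asymp\sqrt n$ these differ from your $\phi_1$-based functions by a factor comparable to $e^{-ak^2/n}$, which is not close to $1$.

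The paper writes $K_n\approx\Sigma_1^{(n)}+\Sigma_{1,2}^{(n)}$:
\begin{enumerate}
\item[(a)] The sum of $E_{j,n}(z)\overline{E_{j,n}(w)}$ over the whole edge regime is evaluated by the summation-by-parts argument of \cite{AC1}. That argument exploits exactly the oscillation described above, and it is where $|x-1|\ge\eta$ enters. It yields $\sqrt{2\pi n}\,(u(z)\overline{u(w)})^ne^{-\frac n2(Q(z)+Q(w))}S_1(z,w)$.
\item[(b)] Only for $n-\log^2 n\le j\le n-1$ does one replace $E_{j,n}\overline{E_{j,n}}$ by $F_{j,n}\overline{F_{j,n}}$. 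There the ratio in \eqref{tos} is $1+\bigO(n^{-1}\log^4 n)$, so the difference $\Sigma_{1,2}^{(n)}$ produces exactly the geometrically convergent correction series in \eqref{extend}, with coefficients of order $(r_1/r_2)^{2j}$.
\end{enumerate}
$S_1$ plus this correction is $S_{1,2}$. Without this two-scale decomposition, or an equivalent summation-by-parts treatment of the $\sim\sqrt{n\log n}$ edge indices, your argument cannot produce the $S_1$ part of the kernel.
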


We note that Theorem \ref{mth0} generalizes the Szeg\H{o} type convergence obtained by Ameur and Cronvall for connected coincidence sets in \cite[Theorem 1.3]{AC1}. 

\smallskip

Combining the above result with \eqref{fundid}, we obtain the following consequence.

\begin{cor} Let $V(z)$ be the harmonic continuation of $\check{Q}|_{\Ext C_1}$ inwards across $C_1$ to a neighbourhood $D$ of $C_1\cup \Ext C_1$. Then for all $(z,w)$ satisfying \eqref{uni1}
\begin{align}\label{fest}|K_n(z,w)|&=\sqrt{2\pi n}\, e^{-\frac n 2(Q-V)(z)-\frac  n 2(Q-V)(w)}|S_{1,2}(z,w)|\cdot (1+\bigO(n^{-\beta})).\end{align}
\end{cor}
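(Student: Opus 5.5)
The corollary follows by taking absolute values in \eqref{conv1} of Theorem \ref{mth0} and rewriting $|u(z)|^{2n}$ with \eqref{fundid}. The plan has three steps.

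\textbf{Step 1: write $\log|u|^2$ as a harmonic function.} By \eqref{udef},
$$\log|u(z)|^2 = 2\log|\phi_1(z)| + \re q_1(z)$$
on the domain $D$ where $\phi_1$ and $q_1$ continue analytically. The right side is harmonic on $D$ wherever $\phi_1\neq 0$. This holds on $D$, since $|\phi_1|>r_1/r_2>0$ there.

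\textbf{Step 2: identify it with $V$.} By \eqref{fundid}, the function $2\log|\phi_1|+\re q_1$ coincides with $\check Q$ on $\Ext C_1$. By uniqueness of harmonic continuation, it therefore equals $V$ on the connected domain $D$. Hence
$$|u(z)|^{2n}=e^{nV(z)}, \qquad z\in D.$$

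\textbf{Step 3: take moduli in \eqref{conv1}.} This gives
$$\bigl|(u(z)\overline{u(w)})^n e^{-\frac n2(Q(z)+Q(w))}\bigr| = e^{-\frac n2(Q-V)(z)-\frac n2(Q-V)(w)}.$$
The error factor $1+\bigO(n^{-\beta})$ has modulus $1+\bigO(n^{-\beta})$, with the same uniformity as in Theorem \ref{mth0}. This yields \eqref{fest}.

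The only point needing care is that the points $z,w$ in \eqref{uni1} lie in $D$. Points of $\Ext C_1$ are in $D$ by definition. Points of $\calB_1$ inside $C_1$ lie within distance $M\sqrt{\log n/n}$ of $C_1$, so they belong to the fixed neighbourhood $D$ of $C_1\cup\Ext C_1$ once $n$ is large. For the finitely many small $n$, the estimate holds trivially after enlarging the implied constant. This is the same domain on which Theorem \ref{mth0} already uses $u$, so no new obstacle arises.
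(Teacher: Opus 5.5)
Your proposal is correct and matches the paper's argument: the paper obtains the corollary by taking moduli in \eqref{conv1} and using \eqref{fundid}, so that $|u|^{2n}=e^{nV}$ after harmonic continuation across $C_1$, exactly as in your three steps. Your check that $\calB_1$ eventually lies in $D$ is a reasonable extra detail. The remark about finitely many small $n$ is unnecessary, since the statement is purely asymptotic.
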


The estimate \eqref{fest} implies that that $K_n(z,w)$ is negligible when $z$ or $w$ is in $\Ext C_1\setminus (\calB_1\cup\calB_2)$. Indeed, there is a constant $c>0$ such that
\begin{equation}\label{bock}(Q-\check{Q})(z)\ge c(\dist(z, C_1\cup C_2))^2,\qquad z\in\Ext C_1.\end{equation}
This latter estimate follows by a straightforward Taylor expansion, as in the proof of \cite[Lemma 2.1]{A1}. Using \eqref{fest}, \eqref{bock} we obtain the following corollary.

\begin{cor} \label{qm} Suppose that $z,w\in \calB_1\cup\Ext C_1$ and at least one of the points $z,w$ are in the complement $\Ext C_1\setminus(\calB_1\cup\calB_2)$. Given an arbitrarily large constant $N$, we may choose $M$ in \eqref{2cho} large enough that
$|K_n(z,w)|\le Cn^{-N}$ where $C$ is a constant depending only on $Q$, $M$ and $N$.
\end{cor}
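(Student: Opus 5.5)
Corollary \ref{qm} follows by combining \eqref{fest} with \eqref{bock}, together with a uniform bound on $|S_{1,2}(z,w)|$. There is one caveat: \eqref{fest} is only stated under the separation condition $|\phi_1(z)\overline{\phi_1(w)}-1|\ge\eta$ in \eqref{uni1}. I would therefore first treat pairs satisfying \eqref{uni1}, and then remove the separation condition using the Cauchy--Schwarz inequality for reproducing kernels,
$$|K_n(z,w)|^2\le K_n(z,z)K_n(w,w).$$
The point is that whenever one of the points, say $z$, lies in $\Ext C_1\setminus(\calB_1\cup\calB_2)$, it sits at distance at least $M\sqrt{\log n/n}$ from $C_1$. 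Hence $|\phi_1(z)|\ge 1+c'M\sqrt{\log n/n}$, but that is not a fixed gap. So for the diagonal value $K_n(z,z)$ I must check that \eqref{fest} still applies, or bound it directly.

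\textbf{Step 1.} Let $z\in\Ext C_1\setminus(\calB_1\cup\calB_2)$. I claim $K_n(z,z)\le Cn^{-N'}$. If $|\phi_1(z)|^2\ge 1+\eta$, then \eqref{uni1} holds with $w=z$. In that case \eqref{fest} gives
$$K_n(z,z)\le C\sqrt n\, e^{-n(Q-V)(z)}|S_{1,2}(z,z)|.$$
On $\Ext C_1$ we have $V=\check Q$, so \eqref{bock} gives
$$n(Q-V)(z)\ge c\,n\,\dist(z,C_1\cup C_2)^2\ge cM^2\log n.$$
Moreover $|S_{1,2}(z,z)|$ is bounded when $|\phi_1(z)|^2\ge 1+\eta$: use the series \eqref{grund}, whose coefficients are at most $1$, together with the boundedness of $\phi_1'$ and $h_1$ on the closure of $\Ext C_1$ near $\infty$. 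Note that $\phi_1'\sim 1/r_1$ and the $j\ge1$ terms give decay $|\phi_1|^{-2}$, so it is bounded.

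If instead $z$ is close to $C_1$, I bound $K_n(z,z)$ directly by a standard subharmonicity estimate. For a weighted polynomial $f=pe^{-nQ/2}$ of unit norm, we have $|p|^2e^{-n\check Q}\le Cn$ globally (the maximum principle plus the Bernstein-type pointwise bound), so
$$K_n(z,z)\le Cn\,e^{-n(Q-\check Q)(z)}\le Cn^{1-cM^2}.$$
This elementary estimate actually handles all $z$ at once, so Step 1 needs no case split.

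\textbf{Step 2.} For $w\in\calB_1\cup\Ext C_1$, the same bound gives $K_n(w,w)\le Cn$, since $Q\ge\check Q$ everywhere. Then Cauchy--Schwarz yields
$$|K_n(z,w)|\le Cn^{1-cM^2/2}.$$
Choosing $M$ with $cM^2/2-1\ge N$ finishes the proof, with $C$ depending only on $Q$, $M$ and $N$.

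\textbf{Main obstacle.} The main work is justifying the uniform pointwise bound $K_n(z,z)\le Cn\,e^{-n(Q-\check Q)(z)}$. I would prove it by the usual argument. First, the mean-value inequality on a disc of radius $n^{-1/2}$ around a point of $S$ gives $|p|^2e^{-nQ}\le Cn$ near $S$, using $C^2$ smoothness of $Q$ near $S$. Then $\frac1n\log(|p|^2/(Cn))$ is subharmonic with growth at most $2\log|z|$ and is bounded by $Q$ on $S$, so it is at most $\check Q$ everywhere by the definition of the obstacle function. Finally, apply this to $K_n(z,z)=\sup\{|f(z)|^2:\ f\in\calW_n,\ \|f\|\le1\}$.
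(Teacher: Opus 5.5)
Your argument is correct, but it takes a different route from the paper's. The paper deduces the corollary in one line from the Szeg\H{o}-type asymptotics \eqref{fest} of Theorem~\ref{mth0} combined with \eqref{bock}. There the factor $e^{-\frac n2(Q-V)(z)}$ produces $n^{-cM^2/2}$, while $\sqrt{n}\,|S_{1,2}(z,w)|$ and $e^{-\frac n2(Q-V)(w)}$ stay at most polynomial in $n$.

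You bypass Theorem~\ref{mth0} entirely. You use only the a priori bound $K_n(z,z)\le Cn\,e^{-n(Q-\check{Q})(z)}$, the Cauchy--Schwarz inequality for the kernel, and \eqref{bock}.

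Your route gains the following:
\begin{itemize}
\item It is elementary.
\item It holds for arbitrary $w\in\C$.
\item It does not depend on the separation condition in \eqref{uni1}. That condition is absent from the hypotheses of the corollary, yet \eqref{fest} is only asserted under it. The paper's one-line deduction therefore tacitly omits pairs such as $z=w$ at distance $2M\sqrt{\log n/n}$ from $C_1$, where $|\phi_1(z)|^2-1\to 0$. Your Cauchy--Schwarz step covers exactly these pairs.
\end{itemize}

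Your route costs a prefactor $n$ instead of $\sqrt{n}$, and you get only an upper bound rather than asymptotics. Both are immaterial, since $M$ is at your disposal. The first case of your Step~1, which invokes \eqref{fest}, is then redundant and can be dropped.

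One wording fix in your last paragraph. From $\frac1n\log(|p|^2/(Cn))\le Q$ on $S$ you cannot conclude $\frac1n\log(|p|^2/(Cn))\le\check{Q}$ directly from the definition of the obstacle function, because the inequality is not known off $S$. Instead, apply the maximum principle to $\frac1n\log(|p|^2/(Cn))-\check{Q}$ on each component of $\C\setminus S$. This uses three facts: $\check{Q}$ is harmonic there, $\check{Q}=Q$ on $\d S$ (because $S\subset S^*$), and $\check{Q}(z)=2\log|z|+\bigO(1)$ at infinity. Your Step~1 already alludes to this argument.
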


We next turn to detailed asymptotics in cases when both $z,w$ are in $\calB_1\cup\calB_2$.

\smallskip

 If $p\in C_1\cup C_2$ we write $\nu(p)$ for the exterior unit normal to the curve $C_j$ to which $p$ belongs.

Given two points $p,q\in C_1\cup C_2$ we consider nearby points $z,w$ of the form
\begin{align}\label{zoom}z=p+\frac s {\sqrt{2n\Delta Q(p)}}\nu(p),\qquad w=q+\frac t {\sqrt{2n\Delta Q(q)}}\nu(q),
\end{align}
where $s,t$ are real numbers with $|s|,|t|\le M\sqrt{\log n}$.

We have the following theorem for the case when at least one of the points $z,w$ is near $C_2$. (Here and in what follows, $\beta$ denotes an arbitrary but fixed number in the interval $0<\beta<\frac 1 4$.)

\begin{thm}\label{mth1} Suppose that $p,q\in C_1\cup C_2$. If both $p$ and $q$ are in $C_1$ we also assume that $|p-q|\ge \eta$ for some sufficiently small $\eta>0$.  Then there are cocycles $c_n(z,w)$ such that, as $n\to\infty$,
\begin{align}c_n(z,w)K_n(z,w)=\sqrt{2\pi n}\cdot S_{1,2}(p,q)\cdot e^{-\frac 1 2(s^2+t^2)}\cdot (1+\bigO(n^{-\beta})).
\end{align}
The implied $\bigO$-constant is  uniform for all such $p$ and $q$ in question. 
\end{thm}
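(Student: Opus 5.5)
Theorem \ref{mth1} will be derived from Theorem \ref{mth0} by a Taylor expansion of the exponential factor at the zoom points \eqref{zoom}. The first step is to check that the pairs $(z,w)$ in question satisfy \eqref{uni1}. Since $p,q\in C_1\cup C_2$ and $|s|,|t|\le M\sqrt{\log n}$, we have $z,w\in\calB_1\cup\calB_2\subset \calB_1\cup\Ext C_1$ for large $n$. For the separation condition there are two cases.
\begin{itemize}
\item If $p\in C_2$, then $|\phi_1(z)|\approx r_2/r_1>1$, so $|\phi_1(z)\overline{\phi_1(w)}-1|\ge \eta'>0$ uniformly, since $|\phi_1(w)|\ge 1-o(1)$. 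The same holds if $q\in C_2$.
\item If $p,q\in C_1$ with $|p-q|\ge\eta$, then $\phi_1(p),\phi_1(q)$ are distinct points on the unit circle, separated by an amount depending only on $\eta$. Hence $|\phi_1(z)\overline{\phi_1(w)}-1|$ is bounded below.
\end{itemize}
Theorem \ref{mth0} then gives $K_n(z,w)=\sqrt{2\pi n}\,(u(z)\overline{u(w)})^ne^{-\frac n2(Q(z)+Q(w))}S_{1,2}(z,w)(1+\bigO(n^{-\beta}))$.

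Next we replace $S_{1,2}(z,w)$ by $S_{1,2}(p,q)$. By \eqref{extend}, $S_{1,2}$ is sesqui-analytic in a neighbourhood of the relevant pairs away from the diagonal singularity $\phi_1(z)\overline{\phi_1(w)}=1$, and we have just shown we are uniformly away from it. Since $|z-p|,|w-q|=\bigO(\sqrt{\log n/n})$, we get $S_{1,2}(z,w)=S_{1,2}(p,q)(1+\bigO(\sqrt{\log n/n}))$. The cases where $S_{1,2}(p,q)$ may be small would need care; I would state the error additively if necessary, but presumably $S_{1,2}$ does not vanish there.

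The main computation concerns the factor $(u(z)\overline{u(w)})^ne^{-\frac n2(Q(z)+Q(w))}$, which splits as $a_n(z)\overline{a_n(w)}$ with $a_n(z)=u(z)^ne^{-\frac n2 Q(z)}$. We write $a_n(z)=|a_n(z)|\cdot e^{i\arg a_n(z)}$ and choose the cocycle $c_n(z,w)=e^{-i\arg a_n(z)}e^{i\arg a_n(w)}$; this is continuous and unimodular locally, which suffices. It then remains to show $|a_n(z)|=e^{-\frac n2(Q-V)(z)}=e^{-s^2/2}(1+\bigO(n^{-\beta}))$, using $|u|^2=e^{V}$ near $C_1$ and near $C_2$ by \eqref{fundid} and \eqref{udef}. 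Here $V$ is harmonic near $C_k$, $V=Q$ on $C_k$, and $\nabla(Q-V)=0$ on $C_k$. On $C_1$ this holds by $C^{1,1}$ smoothness of $\check Q$. On $C_2$ it holds because $Q-\check Q\ge0$ attains a minimum there, and $V$ near $C_2$ is the function $2\log|\phi_2|+\re q_2$, which agrees with $\check Q$ on both sides locally. Taylor expanding in the normal direction gives
$$(Q-V)(p+\delta\nu)=\tfrac12\partial_\nu^2(Q-V)(p)\,\delta^2+\bigO(\delta^3).$$
Since $V$ is harmonic and $Q-V$ vanishes to second order along the curve, the tangential second derivative vanishes, so $\partial_\nu^2(Q-V)=4\Delta(Q-V)=4\Delta Q(p)$. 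With $\delta=s/\sqrt{2n\Delta Q(p)}$ this yields $\frac n2(Q-V)(z)=\frac{s^2}{2}+\bigO(n^{-1/2}(\log n)^{3/2})$, which is absorbed into the $\bigO(n^{-\beta})$ error.

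I expect the main obstacle to be the uniformity, and in particular the justification that $V$, the relevant harmonic continuation, and $\check Q$ coincide to second order on both sides of $C_2$. This must be checked through the compatibility identity $u=\phi_1e^{q_1/2}=\phi_2e^{q_2/2}$, which shows $2\log|u|=\re q_2+2\log|\phi_2|$ equals $Q$ on $C_2$. Together with the minimum property this gives the vanishing gradient.
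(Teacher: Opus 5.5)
Your proposal is correct and follows the paper's own route. You apply Theorem~\ref{mth0}, strip the phase of $(u(z)\overline{u(w)})^n$ with a cocycle, and Taylor-expand $\frac n2(Q-V)$ in the normal direction as in Lemma~\ref{slugger}. Along the way you supply steps the paper leaves implicit: the check of \eqref{uni1}, the explicit cocycle built from $(\overline{u}/|u|)^n$, the vanishing of $\nabla(Q-V)$ and of the tangential second derivative on $C_1\cup C_2$, and the passage from $S_{1,2}(z,w)$ to $S_{1,2}(p,q)$.

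Your caveat about $S_{1,2}(p,q)$ being small is well founded, and the paper glosses over it as well, already in the multiplicative error of Theorem~\ref{mth0}. For instance, take $r_1/r_2=0.1$ and choose the constant $e^{-c}$ in \eqref{grund} suitably in $(10,10.5)$. Then one checks that $S_{1,2}(p,q)=0$ whenever $\phi_1(p)\overline{\phi_1(q)}=-1$. So for such well-separated $p,q\in C_1$ only the additive form of the error is valid.
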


We also have the following result for the density $K_n(z,z)$ near the outpost, proving Gaussian decay as one moves away from $C_2$.

\begin{thm}\label{mcor1} Let $p\in C_2$ and set 
\begin{align}\label{scal}z=p+\frac t {\sqrt{2n\Delta Q(p)}}\nu(p).\end{align}
Then, as $n\to\infty$, we have uniformly for $|t|\le M\sqrt{\log n}$,
\begin{align*}K_n(z,z)&=\sqrt{\frac{n\Delta Q(p)}{2\pi}}\cdot \mu\cdot |\phi_2'(p)|\cdot e^{-t^2}\cdot (1+\bigO(n^{-\beta})),
\end{align*}
where $\mu$ is the expectation of the Heine distribution given in formula \eqref{expheine}.
\end{thm}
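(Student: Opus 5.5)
Theorem \ref{mcor1} should be a direct specialization of Theorem \ref{mth0} (equivalently of the estimate \eqref{fest}) to the diagonal $z=w$ near $C_2$, combined with an explicit evaluation of $S_{1,2}(p,p)$ for $p\in C_2$. Points near $C_2$ satisfy \eqref{uni1}: for $z\in\calB_2$ we have $|\phi_1(z)|^2\approx (r_2/r_1)^2>1$, so $|\phi_1(z)\overline{\phi_1(z)}-1|\ge\eta$ for a fixed $\eta$. Hence \eqref{fest} with $z=w$ gives
\begin{equation*}
K_n(z,z)=\sqrt{2\pi n}\,e^{-n(Q-\check Q)(z)}\,S_{1,2}(z,z)\,(1+\bigO(n^{-\beta})),
\end{equation*}
where we use that $V=\check Q$ on $\Ext C_1$, so $|u(z)|^{2n}e^{-nQ(z)}=e^{-n(Q-\check Q)(z)}$ by \eqref{fundid}.

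The plan is then to treat the two factors separately.

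\emph{The Gaussian factor.} On $C_2$ we have $Q=\check Q$. Since $C_2\subset S^*$ is a minimum set of $Q-\check Q\ge0$, the gradient of $Q-\check Q$ also vanishes there. Because $\check Q$ is harmonic near $C_2$, the tangential second derivatives of $Q-\check Q$ vanish along $C_2$. Hence the normal second derivative equals the full Laplacian: $\d_\nu^2(Q-\check Q)(p)=4\Delta Q(p)$. A Taylor expansion with the scaling \eqref{scal} gives
\begin{equation*}
n(Q-\check Q)(z)=\tfrac n2\cdot 4\Delta Q(p)\cdot\frac{t^2}{2n\Delta Q(p)}+\bigO\!\big(n\,|z-p|^3\big)=t^2+\bigO\!\big((\log n)^{3/2}n^{-1/2}\big),
\end{equation*}
and the error is absorbed into $\bigO(n^{-\beta})$. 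Replacing $S_{1,2}(z,z)$ by $S_{1,2}(p,p)$ costs a relative error of $\bigO(\sqrt{\log n/n})$, by the smoothness of the representation \eqref{grund} near $C_2$.

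\emph{Evaluation of $S_{1,2}(p,p)$.} Put $p\in C_2$ into \eqref{grund}. We have $|\phi_1(p)|^{-2j}=(r_1/r_2)^{2j}$, and
\begin{equation*}
\frac{r_1^{1-2j}}{r_1^{1-2j}+e^{-c}r_2^{1-2j}}\Big(\frac{r_1}{r_2}\Big)^{2j}=\frac{r_1}{r_2}\cdot\frac{r_2^{1-2j}}{r_1^{1-2j}+e^{-c}r_2^{1-2j}}=\frac{r_1}{r_2}e^{c}\cdot\frac{r_2^{1-2j}}{r_1^{1-2j}e^c+r_2^{1-2j}}.
\end{equation*}
Summing over $j$ gives $\frac{r_1}{r_2}e^c\mu$. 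The prefactor is $|\phi_1'(p)|e^{\re h_1(p)}$. We rewrite it as $|\phi_1'(p)|\,\frac{r_1}{r_2}=|\phi_2'(p)|$, and must identify $e^{\re h_1(p)}$ using \eqref{constc}. On $C_2$ we have $\varpi=1$, so $\re h_1(p)=H(p)-c=\frac12\log\Delta Q(p)-c$. Thus
\begin{equation*}
S_{1,2}(p,p)=\frac1{2\pi}|\phi_1'(p)|\sqrt{\Delta Q(p)}\,e^{-c}\cdot\frac{r_1}{r_2}e^c\mu=\frac1{2\pi}|\phi_2'(p)|\sqrt{\Delta Q(p)}\,\mu.
\end{equation*}
Multiplying by $\sqrt{2\pi n}$ gives $\sqrt{n\Delta Q(p)/(2\pi)}\,\mu\,|\phi_2'(p)|\,e^{-t^2}$, as claimed.

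\emph{Main obstacle.} The delicate point is the Taylor step: justifying that the vanishing of the tangential derivatives forces the normal second derivative to be exactly $4\Delta Q(p)$, with cubic remainder uniform in $p$. This uses real-analyticity of $Q$ near $C_2$ and harmonicity of $\check Q$ across $C_2$, which holds since $C_2\subset\Ext C_1$, away from $S$. Beyond this, one only has to track the sign and branch conventions in the factor $e^{\frac12(h_1+\overline{h_1})}$.
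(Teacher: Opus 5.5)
Your proposal is correct and follows essentially the paper's own route: restrict the kernel asymptotics of Theorem \ref{mth0} to the diagonal $z=w$ near $C_2$, using the normal Taylor expansion of Lemma \ref{slugger} (which is how Theorem \ref{mth1} is obtained), and then evaluate $S_{1,2}(p,p)$ explicitly. The only difference is cosmetic: the paper reads off $S_{1,2}(p,p)$ from the $\phi_2$-representation \eqref{22form}, where $|\phi_2(p)|=1$ and $e^{\re h_2(p)}=\sqrt{\Delta Q(p)}$, whereas you use \eqref{grund} together with $\re h_1=\frac12\log\Delta Q-c$ on $C_2$ and $\phi_2=\frac{r_1}{r_2}\phi_1$. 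Your tracking of the factors $e^{\pm c}$ is the consistent one and gives exactly $\mu$ as defined in \eqref{expheine}; the paper's last display has slips at that point.
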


The behaviour of $K_n(z,z)$ for $z$ near $C_1$ is more involved and will not be studied here. See however Section \ref{comrel} for several related comments.

\smallskip

We note the following consequence of Theorem \ref{mcor1}. 

Let $\{z_j\}_1^n$ be a random sample and let $Y_n$ denote the number of indices $j$, $1\le j\le n$, such that $z_j\in\calB_2$, where $\calB_2$ is the neighbourhood of $C_2$ in \eqref{2cho}. Evidently
$$\E_n [Y_n]=\int_{\calB_2}K_n(z,z)\, dA(z).$$

We can now state the following result, which improves on the rate of convergence
in Theorem \ref{compa}.

\begin{cor} \label{chuck}  As $n\to\infty$, we have the convergence $\E_n[Y_n]=\mu\cdot (1+\bigO(n^{-\beta})).$
\end{cor}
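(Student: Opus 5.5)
We plan to integrate the density asymptotics of Theorem \ref{mcor1} over the belt $\calB_2$ in normal coordinates. The result should reproduce $\mu$ exactly, once we use the normalization of $dA$ and the relation between $|\phi_2'|$ and arclength on $C_2$.

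\textbf{Coordinates.} Parametrize $\calB_2$ by $z=p+\frac{t}{\sqrt{2n\Delta Q(p)}}\nu(p)$, with $p\in C_2$ running over arclength $ds(p)$ and $|t|\le T_n$. Here $T_n$ is chosen so that the belt $\dist(z,C_2)\le M\sqrt{\log n/n}$ corresponds to $|t|\le M\sqrt{2\Delta Q(p)\log n}$. For $n$ large, since $C_2$ is real-analytic, the map $(p,t)\mapsto z$ is a diffeomorphism onto $\calB_2$. Its Jacobian is
$$\frac{1}{\sqrt{2n\Delta Q(p)}}\bigl(1+\bigO(\sqrt{\log n/n})\bigr),$$
where the correction comes from the curvature of $C_2$ times the normal displacement.

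If the constant in the belt exceeds the range $M\sqrt{\log n}$ allowed in Theorem \ref{mcor1}, we enlarge $M$ in Theorem \ref{mcor1}, which is arbitrary. Alternatively, we handle the thin leftover region using Corollary \ref{qm} and \eqref{fest} together with \eqref{bock}, which give a density bound $Cn^{-N}$ there.

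\textbf{Integration.} Recall $dA=\pi^{-1}d^2z$. By Theorem \ref{mcor1},
$$\E_n[Y_n]=\frac1\pi\oint_{C_2}\int_{|t|\le T_n}\sqrt{\frac{n\Delta Q(p)}{2\pi}}\,\mu\,|\phi_2'(p)|\,e^{-t^2}\,\frac{dt\,ds(p)}{\sqrt{2n\Delta Q(p)}}\,(1+\bigO(n^{-\beta})).$$
The factors $\sqrt{n\Delta Q(p)}$ cancel, leaving the constant $\frac{1}{2\sqrt\pi}$. The Gaussian integral equals $\sqrt\pi$ up to an error $e^{-T_n^2}=\bigO(n^{-N})$. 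This leaves
$$\frac{\mu}{2\pi}\oint_{C_2}|\phi_2'(p)|\,ds(p).$$
Since $\phi_2$ maps $C_2$ onto the unit circle, $|\phi_2'|\,ds$ is the pushforward of arclength on $\partial\D$, so the last integral is $2\pi$. Hence $\E_n[Y_n]=\mu(1+\bigO(n^{-\beta}))$. The Jacobian error $\bigO(\sqrt{\log n/n})$ is absorbed because $\beta<\frac14<\frac12$.

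\textbf{Main obstacle.} The only delicate point is uniformity. Theorem \ref{mcor1} holds uniformly in $p\in C_2$ and $|t|\le M\sqrt{\log n}$, and we must check that this parameter range covers $\calB_2$, or that the remainder is negligible. Both follow from the flexibility in $M$ and the Gaussian bound \eqref{bock} combined with \eqref{fest}.
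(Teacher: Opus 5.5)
Your proposal is correct and follows essentially the same route as the paper: integrate the density asymptotics of Theorem \ref{mcor1} over $\calB_2$ in the normal coordinates \eqref{cov}, evaluate the Gaussian integral, and use $\oint_{C_2}|\phi_2'(p)|\,|dp|=2\pi$. You are also more careful than the paper on two points it leaves implicit: the $p$-dependent range $|t|\le M\sqrt{2\Delta Q(p)\log n}$ that actually describes $\calB_2$, and the $\bigO(\sqrt{\log n/n})$ curvature correction in the Jacobian.
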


\subsection{Berezin measures and analytic carriers} Fix a point $z\in\Ext C_1$ and consider the following probability measure on $\C$
\begin{align}d\mu_{n,z}(w)=\frac {|K_n(z,w)|^2}{K_n(z,z)}\, dA(w).\end{align}
This is known as a Berezin measure; probabilistically it measures the repulsive effect obtained by inserting a point charge at $z$, cf.~\cite[Section 7.6]{AHM}.

Asymptotic properties, as $n\to\infty$, of Berezin measures have been well studied in cases where there are no outposts. For example, if $z$ is in the droplet, then under mild assumptions, $\mu_{n,z}$ converges weakly to the Dirac measure $\delta_z$, while if $z$ is in the exterior, then $\mu_{n,z}$ converges to the harmonic measure $\omega_z$ of the exterior domain, see e.g.~\cite[Section 7]{AHM}.

We turn to the asymptotics of the measures $\mu_{n,z}$ in the present setting with an outpost $C_2$. In the following, the point $z$ is fixed in $\Ext C_1$, and we identify $\mu_{n,z}$ with the functional
$$\mu_{n,z}(f):=\int_\C f\, d\mu_{n,z}.$$

Let us denote by $\calA(\Ext C_1)$ the algebra of analytic functions on $\Ext C_1$ which extend continuously to $C_1$.

We have the following theorem.

\begin{thm} \label{massthm} Let $f\in \calA(\Ext C_1)$. Extend $f$ by continuity to $\hat{\C}$ in some way. Then, for any given $N>0$, we have as $n\to\infty$
$$\mu_{n,z}(f)=(b_z^{(1)}(f)+b_z^{(2)}(f))\cdot (1+\bigO(n^{-\beta}))+\|f\|_\infty\cdot \bigO(n^{-N})$$
where $b_z^{(1)}$ and $b_z^{(2)}$ are the functionals
\begin{align}b_z^{(k)}(f):=\oint_{C_k}f(q)\frac {|S_{1,2}(z.q)|^2}{S_{1,2}(z,z)}\,|dq|,\qquad (k=1,2);\end{align}
$\|f\|_\infty$ denotes the sup-norm of the extended function.

Moreover, we have the following reproducing property
\begin{equation}\label{repro0}b_z^{(1)}(f)+b_z^{(2)}(f)=f(z).\end{equation}
Finally, if we set $r=|\phi_2(z)|$ (so $r>r_1/r_2$), then the total mass of the measure $b_z^{(2)}$ is
$$b_z^{(2)}(1)=\frac {\sum_{j=1}^\infty r^{-2j}\big(\frac {r_2^{1-2j}}{r_1^{1-2j}e^{-c}+r_2^{1-2j}}\big)^{\,2}}{\sum_{j=1}^\infty r^{-2j}\frac {r_2^{1-2j}}{r_1^{1-2j}e^{-c}+r_2^{1-2j}}}.$$
\end{thm}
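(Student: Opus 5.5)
Fix $z\in\Ext C_1$. The plan is to split $\mu_{n,z}(f)$ according to where $w$ lies and to insert the asymptotics of Theorems \ref{mth0} and \ref{mth1} together with the negligibility estimate of Corollary \ref{qm}.

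First, for $w$ outside $\calB_1\cup\calB_2$, we use Corollary \ref{qm} on $\Ext C_1$. For $w$ inside the droplet region $\Int C_1\setminus\calB_1$, we use a standard pointwise bound $|K_n(z,w)|^2\le K_n(z,z)K_n(w,w)$ combined with the estimate from \eqref{fest} for $K_n(z,z)$. Here $K_n(z,z)$ is exponentially small, of size $e^{-n(Q-\check Q)(z)}$. Better, one can use that $w\mapsto K_n(z,w)$ is a weighted polynomial, so $|K_n(z,w)|e^{\frac n2 (Q-\check Q)(w)}$ is controlled by its values near $C_1$ via the maximum principle. Either route shows that these regions contribute $\|f\|_\infty\bigO(n^{-N})$ to $\mu_{n,z}(f)$. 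Note that $z$ is fixed at positive distance from $C_1\cup C_2$ unless $z$ lies on $C_2$; in the generic case the condition $|\phi_1(z)\overline{\phi_1(w)}-1|\ge\eta$ of \eqref{uni1} holds automatically because $|\phi_1(z)|>1$.

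On the belts we use Theorem \ref{mth0}, which gives
$$|K_n(z,w)|^2/K_n(z,z)=\sqrt{2\pi n}\,e^{-n(Q-V)(w)}\frac{|S_{1,2}(z,w)|^2}{S_{1,2}(z,z)}(1+\bigO(n^{-\beta})).$$
We write $w=q+\frac{t}{\sqrt{2n\Delta Q(q)}}\nu(q)$ with $q\in C_k$. Taylor expanding $Q-V$ (which vanishes to second order on $C_1\cup C_2$, with second normal derivative $4\Delta Q$) gives $n(Q-V)(w)=t^2+\bigO(n^{-1/2}(\log n)^{3/2})$. The Jacobian is $dA(w)=\pi^{-1}(2n\Delta Q(q))^{-1/2}\,dt\,|dq|(1+\bigO(\sqrt{\log n/n}))$. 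Integrating $e^{-t^2}$ over $t$ yields $\sqrt\pi$. The constant then becomes $\sqrt{2\pi n}\cdot\sqrt\pi/(\pi\sqrt{2n})=1$, leaving the weight $(\Delta Q(q))^{-1/2}$.

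Continuity of $f$ and of $S_{1,2}(z,\cdot)$ across the belt turns $f(w)$ and $S_{1,2}(z,w)$ into $f(q)$ and $S_{1,2}(z,q)$ with $o(1)$ error. To obtain the rate $n^{-\beta}$ one needs a modulus of continuity; otherwise the error should be read as relative error in the kernel. This produces
$$\oint_{C_k}f(q)\frac{|S_{1,2}(z,q)|^2}{S_{1,2}(z,z)}\frac{|dq|}{\sqrt{\Delta Q(q)}}.$$
I expect the weight $|dq|/\sqrt{\Delta Q(q)}$ belongs in the definition of $b_z^{(k)}$, matching \eqref{sqn}. On the $C_1$ side, the half of the belt lying inside $S$ is covered because Theorem \ref{mth0} allows $w\in\calB_1$. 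The main obstacle is this bookkeeping of uniform errors together with justifying the cutoff inside the droplet; the rest is routine.

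For the reproducing identity \eqref{repro0}, we use that $f\in\calA(\Ext C_1)$ belongs to $H_{1,2}$ when $f(\infty)=0$. Then
$$f(z)=\langle f,S_{1,2}(\cdot,z)\rangle_{1,2}$$
does not directly give $|S_{1,2}|^2$. Instead, apply the reproducing property to $g=f\,S_{1,2}(\cdot,z)$, which is analytic in $U$ and vanishes at $\infty$:
$$g(z)=\langle g,S_{1,2}(\cdot,z)\rangle_{1,2}=\oint f\,|S_{1,2}(\cdot,z)|^2\,\frac{|dq|}{\sqrt{\Delta Q}}.$$
Dividing by $S_{1,2}(z,z)$ gives \eqref{repro0}.

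Finally, $b^{(2)}_z(1)$ is computed from Lemma \ref{boll}. On $C_2$ we have $|\phi_1|=r_2/r_1$ and $\sqrt{\Delta Q}=|e^{h_1}|e^{c}$ there by \eqref{constc}. With the substitution $q\mapsto\phi_1(q)$, the factor $|\phi_1'|e^{\re h_1}/\sqrt{\Delta Q}$ cancels to a constant. Orthogonality of $\phi_1^{-j}$ on the circle then gives $\sum_j r^{-2j}\lambda_j^2$ over $\sum_j r^{-2j}\lambda_j$, where $\lambda_j$ is the $C_2$-share of the $j$-th mode.
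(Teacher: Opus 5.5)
Your architecture matches the paper's: localize $\mu_{n,z}$ to $\calB_1\cup\calB_2$, insert Theorem \ref{mth0} there, change variables and integrate the Gaussian, reproduce $f\,S_{1,2}(\cdot,z)$ to get \eqref{repro0}, and use Parseval for the mass. The gap is the bulk region $I_n=(\Int C_1)\setminus\calB_1$, which you treat as routine; neither of your two routes handles it.

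\emph{Cauchy--Schwarz.} It gives $|K_n(z,w)|^2/K_n(z,z)\le K_n(w,w)$, hence only $\mu_{n,z}(I_n)\le\int_{I_n}K_n(w,w)\,dA(w)$. This is of order $n$, since essentially all particles sit in the droplet. The exponential smallness of $K_n(z,z)$ is exactly what you cannot afford to divide by.

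\emph{Maximum principle.} It runs in the wrong direction. The weighted maximum principle bounds $|P|e^{-\frac n2\check Q}$ on all of $\C$ by its supremum over $S$, i.e.\ exterior values by droplet values. Inside $S$, the function $\log|P|-\frac n2 Q$ is superharmonic away from the zeros of $P$ (its Laplacian is $-\frac n2\Delta Q\le 0$). So interior values are not controlled by values near $C_1$: already $P\equiv 1$, $Q=|w|^2$ gives $e^{-\frac n2|w|^2}$, which is $1$ at the origin and $e^{-n/2}$ on the edge.

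What is missing is a genuine off-diagonal decay estimate for $w$ in the bulk, relative to $K_n(z,z)$, i.e.\ a bound of type \eqref{comb1}; the paper imports it from \cite[Theorem 8.1]{CPAM}. If you want to avoid that input, there is a cheap substitute with a weaker error. Run your belt computation with $f\equiv 1$; the integrand is positive, so the relative error $\bigO(n^{-\beta})$ is legitimate. Then \eqref{repro0} with $f=1$ gives $\mu_{n,z}(\calB_1\cup\calB_2)=1+\bigO(n^{-\beta})$. Since $\mu_{n,z}$ is a probability measure, the complement has mass $\bigO(n^{-\beta})$. This proves the theorem with $\|f\|_\infty\cdot\bigO(n^{-\beta})$ in place of $\|f\|_\infty\cdot\bigO(n^{-N})$.

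The same division issue affects your exterior step, and the paper's Lemma \ref{stuck} in the same way. Corollary \ref{qm} is only an absolute bound $|K_n(z,w)|\le Cn^{-N}$. That is useless after dividing by $K_n(z,z)\asymp\sqrt n\,e^{-n(Q-\check Q)(z)}$ when $z\notin C_2$. Use instead the relative form you already use on the belts: $|K_n(z,w)|^2/K_n(z,z)=\sqrt{2\pi n}\,e^{-n(Q-\check Q)(w)}|S_{1,2}(z,w)|^2/S_{1,2}(z,z)\cdot(1+\bigO(n^{-\beta}))$ for $w\in\Ext C_1$. Combine it with \eqref{bock} and the growth of $Q$ at infinity.

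The rest is sound, and both of your caveats are right.
\begin{itemize}
\item The belt integral does produce the measure $|dq|/\sqrt{\Delta Q(q)}$. The paper removes this weight only by inserting a factor $\sqrt{\Delta Q(q)}$ that does not occur in Theorem \ref{mth1}. Yet \eqref{repro0} (the $H_{1,2}$ norm carries the weight) and the paper's own Parseval computation of $b_z^{(2)}(1)$ (which silently drops $e^{\re h_2(q)}=\sqrt{\Delta Q(q)}$) both need the weighted definition.
\item Since the extension of $f$ inside $C_1$ is arbitrary, replacing $f(w)$ by $f(q)$ on $\calB_1$ gives a rate only under a modulus of continuity, and then only as an additive error.
\end{itemize}

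Your $\phi_1$-version of the Parseval step is equivalent to the paper's use of \eqref{22form}. It yields $\sum_j R^{-2j}(1-\lambda_j)\lambda_j/\sum_j R^{-2j}(1-\lambda_j)$ with $R=|\phi_1(z)|=(r_2/r_1)\,r$. Since $R^{-2j}(1-\lambda_j)=\frac{r_1}{r_2}e^{c}\,r^{-2j}\lambda_j$, this is the stated ratio. From Lemma \ref{boll} one gets $\lambda_j=r_2^{1-2j}/(e^{c}r_1^{1-2j}+r_2^{1-2j})$, consistent with \eqref{expheine}. The $e^{-c}$ in the displayed formula reflects a sign inconsistency in the paper between Lemma \ref{boll} and \eqref{22form}.
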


The equation \eqref{repro0} says that the functional $b_z^{(1)}+b_z^{(2)}$ has the same exterior analytic carrier as the point evaluation $\delta_z$, over functions $f\in \calA(\Ext C_1)$, i.e., $b_z^{(1)}(f)+b_z^{(2)}(f)=\delta_z(f)$.

In the well studied case without outposts, the Berezin measure rooted at an exterior point $z$ is supported on $C_1$ and is given by
$$d\omega_z(q):=\frac {|S_1(z,q)|^2}{S_1(z,z)}\,|dq|,\qquad (q\in C_1),$$
where $S_1(z,w)$ is the Szeg\H{o} kernel \eqref{s1def}. Moreover, $\omega_z$ is the harmonic measure for $\Ext C_1$.
(See \cite{AC1,AHM,GaM} and references therein.) 

In conclusion, the three probability measures $\delta_z$, $\omega_z$, $b_z^{(1)}+b_z^{(2)}$ all have the same carrier with respect to $\calA(\Ext C_1)$.

\begin{rem}The total mass $b_z^{(2)}(1)$ of the measure $b_z^{(2)}$ is an increasing function of $r=\phi_2(z)$; it increases from $0$ at $r=\frac {r_1}{r_2}$ to
$\frac {r_2^{-1}}{r_1^{-1}e^{-c}+r_2^{-1}}$, as $r\to+\infty$, see figure \ref{fig4}.
\end{rem}

\begin{center}
\begin{figure}
\includegraphics[width=0.6\textwidth]{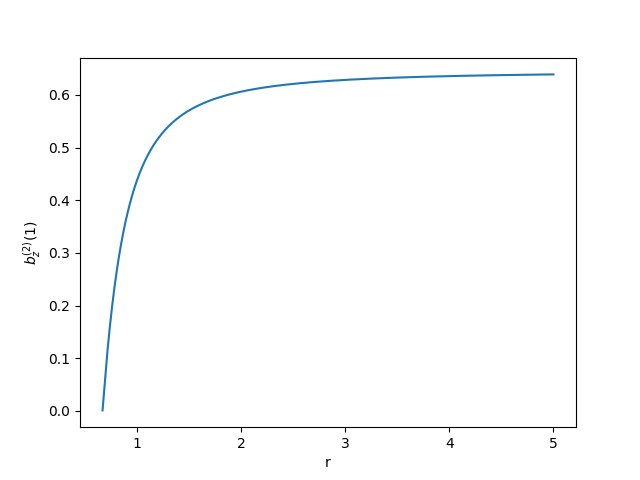}
\caption{The total mass of the measure $b^{(2)}_z$ as a function of $r=\phi_2(z)$, $r\ge \frac {r_2}{r_1}$, with $r_1=1, \, r_2=1.5, \, c=1$.}
\label{fig4}
\end{figure}

\end{center}

\subsection{A few concrete examples} \label{mex} As noted in \cite{AC}, it is easy to generate examples of outpost potentials meeting all conditions imposed above. For example, let us explain how the ensemble in Figure \ref{fig1} was generated. The point of departure is a class of 1-point unbounded quadrature domains considered in the recent work \cite{GrM}. (The complement $\hat{\C}\setminus S$ is a quadrature domain in a suitable generalized sense, see \cite{AC1,GrM} and references therein.)

As in \cite[Section 1.3.1]{AC} we could start with any Hele-Shaw potential, i.e., a potential whose Laplacian is constant in a neighbourhood of the droplet. Let us pick a potential $Q_1(z)$ of the form studied in \cite[Section 2]{GrM} 
\begin{equation}Q_1(z)=\begin{cases}\frac 1 {r_1}\bigg(|z|^2-2\re(\alpha\cdot \log (z-2))\bigg)& z\in K,\cr
+\infty & \mathrm{otherwise},\end{cases}\label{belem}\end{equation}
where
$K$ is a simply connected compact subset of $\C$, chosen so that the droplet $S=S[Q_1]$ is compactly contained in the interior of $K$ while $2\not\in K$; $\alpha$ is a complex parameter such that $|\alpha|<2+\re\alpha$. The droplet $S$ then has normalized area $\int_S dA=\sqrt{r_1}$. Write $C_1=\d S$. 

For simplicity, we choose $-1<\alpha<0$.
As is shown in \cite[Section 2.1]{GrM} the normalized conformal map $\phi_1:\Ext C_1\to\D_*$ has the inverse
$$\phi_1^{-1}(w)=r_1w\frac {w-w_0+\frac {2} {r_1}\frac {|w_0|^2-1}{|w_0|^2}}{w-\bar{w}_0^{-1}},\qquad (w\in\D_*),$$
where $w_0$ is a real root to the equation
$$r_1^2 w_0^4-2r_1w_0^3-\alpha w_0^2-2r_1w_0+4=0.$$
The droplet is well defined with smooth boundary when $r_1>0$ is below a certain critical value $r_{1*}=r_{1*}(\alpha)$, at which the edge develops a cusp of type $(3,2)$. (The droplet corresponding to $r_{1*}$ is maximal and is understood in a local sense \cite{GrM}.)
\begin{center}
\begin{figure}
\includegraphics[width=0.6\textwidth]{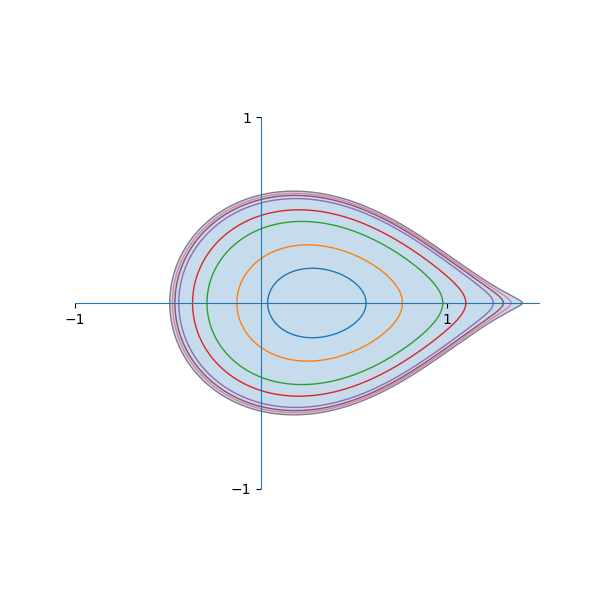}
\caption{Droplets corresponding to the potential \eqref{belem} with $\alpha = -0.5, \, r_1\in\{0.225, 0.375, 0.525, 0.6, 0.675, 0.712, 0.731\}$ and critical value $r_{1*}=0.75.$}

\label{fig3}
\end{figure}
\end{center}

Now fix any $r_1, r_2$ with $0<r_1<r_{1*}$ and $r_2>r_1$, and set
$$C_2:=\phi_1^{-1}(\frac {r_2}{r_1}\T).$$ We now define a family of outpost potentials having $C_2$ for an outpost. To accomplish this, we use the general device given in \cite[Section 1.3.1]{AC}. Namely, we fix an arbitrary constant $t_0>0$ and put
$$Q(z):=\begin{cases}Q_1(z),& z\in \overline{\calN}_1,\cr
\check{Q}_1(z)+t_0\dfrac {(|\phi_1(z)|^2-(r_2/r_1)^2)^2}{2|\phi_1(z)\phi_1'(z)|^2},& z\in\overline{\calN}_2,\cr
+\infty,& \mathrm{otherwise},\end{cases}$$
where $\calN_1$ is a small neighbourhood of $S$, $\calN_2$ a small neighbourhood of $C_2$, and $\overline{\calN}_1\cap\overline{\calN}_2=\emptyset.$ All conditions for an outpost potential are then satisfied with $h_1\equiv r_1$ and $c=\frac 1 2 \log \frac {t_0} {r_1}$. 

The outpost ensemble in Figure \ref{fig2} was generated in a similar way, but starting from an elliptic Ginibre potential $Q_1(z):=\frac 1 {r_1}(|z|^2+\re(\alpha z^2))$, where $\alpha$ is a complex parameter with $|\alpha|<1$.

\subsection{Comments and related work} \label{comrel} Outposts in the context of one-dimensional ensembles have been well studied, but tend to behave quite differently from the present ones. See \cite[Remark 4]{AC} for a comparison.

In \cite{ACC}, a large $n$ expansion for the free energy is given for radially symmetric potentials having a circular outpost in the unbounded component of the complement of the droplet. The outpost affects the constant term and can be expressed in terms of $q$-Pochhammer symbols. A generalization to the present kind of outpost potentials is conjectured in \cite{AC}.

Very recently, in \cite{N}, Kohei Noda obtained several new results in this direction. Among other things, he considers radially symmetric potentials such that the coincidence set accommodates an arbitrary number of circular outposts, and derives free energy asymptotics for such ensembles. This leads to the introduction of a new kind of multi-dimensional Heine distribution, which in a sense, accounts for interactions between different outposts.

We finally mention a few related problems, which we omit to discuss for reasons of length.

Firstly, it is natural to study diagonal asymptotics $K_n(z,z)$ when $z$ is in a vicinity of the outer edge $C_1$ of the droplet. This case differs from the above studied case, since bulk interactions enter the asymptotic picture. Indeed, if we fix $p\in C_1$
and scale about $p$ as in \eqref{scal}, then we have the leading order error-function asymptotic
\begin{align}\label{chod}K_n(z,z)=n\Delta Q(p)\cdot \frac {\erfc t} 2+o(n),\qquad (n\to\infty),\end{align}
which should be compared with Theorem \ref{mcor1}, in which $K_n(z,z)$ has order of magnitude $\sqrt{n}$. The asymptotics in \eqref{chod} has been verified at the edge of a wide variety of two-dimensional ensembles, cf.~the recent work \cite{CW} and references therein. A proof of \eqref{chod} in the present situation can either be based on the approximations in Lemma \ref{classlem} and Lemma \ref{usel} below, or by adapting the general method of proof in 
\cite{CW}. (We omit details.)

In addition to \eqref{chod}, subleading asymptotics has recently been found in a variety of settings \cite{A,ACC1,C} making it plausible that there should be an expansion of the form (still in the situation of \eqref{scal} with $p\in C_1$)
\begin{equation}K_n(z,z)=n\Delta Q(p)\frac {\erfc t} 2+\sqrt{n\Delta Q(p)}\cdot C(p,n;t)+\bigO(1),\qquad (n\to\infty),\end{equation}
where $C(p,n;t)$ oscillates in $n$, in a bounded way, as $n\to\infty$. It is an interesting problem (which we shall not consider here) to identify the coefficient $C(p,n;t)$.

Secondly, we recall that Szeg\H{o} type asymptotics for the correlation kernel is worked out for radially symmetric potentials with ring-shaped spectral gaps in \cite{ACC1}. In this case, the long-range correlations $K_n(p,q)$, where $p,q$ are distinct points along the edge of a spectral gap, ``oscillate'' in $n$. It is likely that these correlations should have universal generalizations, in a setting of spectral gap potentials as in \cite{AC}.
(In this connection, we note that a ``parallel'' but different kind of Szeg\H{o} type correlations, at the edge of an annular spectral gap with hard edge constraints,
is proved for the class of model Mittag-Leffler ensembles in \cite{ACC2}. Such droplets are not postcritical for Laplacian growth, but rather by imposing a hard wall.)

\subsection{Plan of this paper} In Section \ref{sec1} we derive various representations for the weighted Szeg\H{o} kernel $S_{1,2}(z.w)$, and we prove Lemma \ref{boll}.
In Section \ref{secprep} we discuss various relevant approximation formulas for the wavefunctions, i.e., the weighted orthogonal polynomials $e_{j,n}(z)$ with respect to the weight $e^{-nQ/2}$. We require two partially overlapping approximation formulas, one for the ``edge regime'', meaning indices $j$ with $n-C\sqrt{n\log n}\le j\le n-1$, and another for the ``bifurcation regime'', where $n-\log^2 n\le j\le n-1$. In Section \ref{thep}, we combine all these approximations and prove Theorem \ref{mth0}. 
In Section \ref{corc} we prove Theorem \ref{mth1} and Theorem \ref{mcor1}.
Finally, in Section \ref{berem} we prove Theorem \ref{massthm} and Corollary \ref{chuck}.

\section{Szeg\H{o} kernels} \label{sec1} In this section, we prove the formulas for the Szeg\H{o} kernel $S_{1,2}(z,w)$ in Lemma \ref{boll}, and we prove a few other representations that will come in handy.

To this end, we note that for $k=1,2$, the functions $$f_{j,k}(z):=\frac {\sqrt{r_k\phi_k'(z)}}{(r_k\phi_k(z))^j}e^{\frac 1 2 h_k(z)},\qquad (j=1,2,\ldots)$$
form an orthogonal basis for the Hilbert space $H_{1,2}$ (see \eqref{sqn}) with
$$\oint_{C_1}|f_{j,1}(z)|^2\frac {|dz|}{\sqrt{\Delta Q(z)}}=2\pi r_1^{1-2j},\qquad \oint_{C_1}|f_{j,2}(z)|^2\frac {|dz|}{\sqrt{\Delta Q(z)}}=e^{-c}\cdot 2\pi r_1^{2j-1}$$
and
$$\oint_{C_2}|f_{j,1}(z)|^2 \frac {|dz|}{\sqrt{\Delta Q(z)}}=e^{c}\cdot 2\pi r_2^{1-2j},\qquad \oint_{C_2}|f_{j,2}(z)|^2\frac {|dz|}{\sqrt{\Delta Q(z)}}=2\pi r_2^{1-2j}.$$

Now define
$$k_{j,1}=\|f_{j,1}\|_{1,2}^2=2\pi\cdot (r_1^{1-2j}+e^{c}r_2^{1-2j}),\qquad k_{j,2}=\|f_{j,2}\|_{1,2}^2=2\pi\cdot (r_1^{2j-1}e^{-c}+r_2^{1-2j}).$$
Then
\begin{align}S_{1,2}(z,w)&=\sum_{j=1}^\infty \frac {f_{j,1}(z)\overline{f_{j,1}(w)}}{k_{j,1}}\label{11form}\\
&=\frac 1 {2\pi}\sqrt{\phi_1'(z)}\overline{\sqrt{\phi_1'(w)}}e^{\frac 1 2(h_1(z)+\overline{h_1(w)})}\sum_{j=1}^\infty \frac 1 {(\phi_1(z)\overline{\phi_1(w)})^j}\frac {r_1^{1-2j}}{r_1^{1-2j}+e^{c}r_2^{1-2j}}.\nonumber
\end{align}
This proves Lemma \ref{boll}.

The formula \eqref{11form} is convenient in cases when both $z$ and $w$ are close to the curve $C_1$, since  $\phi_1(z)\overline{\phi_1(w)}$ is nearly unimodular.

For applications in which one or both the points $z,w$ are near $C_2$, we note the following alternative representations, which are obtained from \eqref{11form}
and the relationship $\phi_2=\frac {r_1}{r_2}\phi_1$
\begin{lem} \label{sluggo} The kernel $S_{1,2}(z,w)$ is given by
\begin{align}S_{1,2}(z,w)&
=\frac 1 {2\pi}\sqrt{\phi_2'(z)}\overline{\sqrt{\phi_2'(w)}}e^{\frac 1 2(h_2(z)+\overline{h_2(w)})}\sum_{j=1}^\infty \frac 1 {(\phi_2(z)\overline{\phi_2(w)})^j}\frac {r_2^{1-2j}}{e^{-c}r_1^{1-2j}+r_2^{1-2j}}\label{22form}\\
&=\frac 1 {2\pi} \sqrt{\phi_1'(z)}\overline{\sqrt{\phi_2'(w)}}e^{\frac 1 2(h_1(z)+\overline{h_2(w)})}\sum_{j=1}^\infty \frac 1 {(\phi_1(z)\overline{\phi_2(w)})^j}\frac {(r_1r_2)^{\frac 1 2-j}}{r_1^{1-2j}e^{-c/2}+e^{c/2}r_2^{1-2j}}.\label{12form}
\end{align}
\end{lem}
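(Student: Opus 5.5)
The plan is to avoid resumming anything and instead use the orthogonal basis introduced just before Lemma \ref{boll}. The point is that $f_{j,1}$ and $f_{j,2}$ are proportional for each $j$, so the reproducing kernel of $H_{1,2}$ can be expanded in either family, or in a mixed way. Concretely, the steps are:

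\begin{enumerate}
\item Pin down $h_2$.
\item Compute the constants $\lambda_j$ with $f_{j,2}=\lambda_j f_{j,1}$.
\item Write $S_{1,2}$ three ways using $\lambda_j$.
\item Check that the coefficients reduce to those displayed in \eqref{22form} and \eqref{12form}.
\end{enumerate}

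For step 1, I take $h_2$ to be the analytic function on $\Ext C_1$, real at $\infty$, with $\re h_2=\frac12\log\Delta Q$ along $C_2$. Evaluate \eqref{constc} on $C_2$, where $\varpi=1$ and $H=\frac12\log\Delta Q$. This shows that $\re(h_2-h_1)$ is a real constant on $C_2$, equal to $\pm c$ depending on the sign convention fixed in \eqref{constc}. By the maximum principle and the normalization at $\infty$, $h_2-h_1$ is then that same real constant, and I will determine the sign from this boundary identity. I will also use $\phi_2=\frac{r_1}{r_2}\phi_1$, hence $\sqrt{\phi_2'}=\sqrt{r_1/r_2}\,\sqrt{\phi_1'}$, with the branches real at infinity.

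For step 2, inserting these relations into the definition of $f_{j,k}$ gives
\[
f_{j,2}=\lambda_j f_{j,1},\qquad \lambda_j=\Big(\frac{r_1}{r_2}\Big)^{j-\frac12}e^{\frac12(h_2-h_1)},
\]
where $\lambda_j$ is a real positive constant. Consequently $\{f_{j,2}\}_{j\ge1}$ is also an orthogonal basis of $H_{1,2}$.

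For step 3, since the reproducing kernel does not depend on how an orthogonal basis is normalized, we have
\[
S_{1,2}(z,w)=\sum_{j\ge1}\frac{f_{j,2}(z)\overline{f_{j,2}(w)}}{\|f_{j,2}\|_{1,2}^2}
=\sum_{j\ge1}\frac{f_{j,1}(z)\overline{f_{j,2}(w)}}{\lambda_j\|f_{j,1}\|_{1,2}^2}.
\]
For the mixed representation I will symmetrize the denominator as
\[
\lambda_j\|f_{j,1}\|^2_{1,2}=\|f_{j,1}\|_{1,2}\,\|f_{j,2}\|_{1,2},
\]
which holds because $\lambda_j>0$. The first expansion then gives \eqref{22form} once the factors $\sqrt{r_2\phi_2'}\,(r_2\phi_2)^{-j}$ are pulled out. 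The second gives \eqref{12form} after collecting the powers $r_1^{\frac12-j}r_2^{\frac12-j}=(r_1r_2)^{\frac12-j}$ in the numerator.

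For step 4, I compute the norms directly rather than quoting the displayed values. On $C_k$ one has $|r_k\phi_k|=r_k$, $|dz|\,|\phi_k'|=d\theta$ under $z=\phi_k^{-1}(e^{i\theta})$, and $|e^{h_k}|/\sqrt{\Delta Q}=1$. On the other curve the ratio $|e^{h_k}|/\sqrt{\Delta Q}$ equals $e^{\pm c}$, by step 1. The expected output is that $\|f_{j,1}\|\,\|f_{j,2}\|$ is proportional to $\sqrt{r_1^{1-2j}+e^{\mp c}r_2^{1-2j}}\,\sqrt{e^{\pm c}r_1^{1-2j}+r_2^{1-2j}}$. After factoring out the constant $e^{\pm c/2}$, this yields exactly the denominator $r_1^{1-2j}e^{-c/2}+e^{c/2}r_2^{1-2j}$ of \eqref{12form}.

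The only real obstacle is bookkeeping of the constant $c$. The sign of $c$ in $h_2-h_1$, in the cross norms on $C_1$ and $C_2$, and in the coefficients must all be taken consistently. I will fix it once from \eqref{constc} on $C_2$ and then verify that \eqref{11form}, \eqref{22form} and \eqref{12form} agree termwise, $j$ by $j$, using $f_{j,2}=\lambda_j f_{j,1}$. Convergence of the series, and hence the validity of all three expansions, follows because $|\phi_1(z)\overline{\phi_1(w)}|>1$ for $z,w\in\Ext C_1$.
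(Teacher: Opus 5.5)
Your strategy is essentially the paper's. The paper obtains \eqref{22form} and \eqref{12form} by substituting $\phi_2=\frac{r_1}{r_2}\phi_1$, together with the relation between $h_1$ and $h_2$, into \eqref{11form}; re-expanding in the basis $\{f_{j,2}\}$ is the same computation.

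Your value of $\lambda_j$ is wrong, however. Since $r_2\phi_2=r_1\phi_1$ and $r_2\phi_2'=r_1\phi_1'$, you have $f_{j,2}=e^{\frac12(h_2-h_1)}f_{j,1}$, with a constant independent of $j$. The factor $(r_1/r_2)^{j-\frac12}$ is spurious, and your own step 4 contradicts it, since there $\|f_{j,2}\|_{1,2}^2/\|f_{j,1}\|_{1,2}^2=e^{\pm c}$. The slip is harmless for the final formulas only because you never use the explicit $\lambda_j$ and compute $\|f_{j,1}\|_{1,2}\|f_{j,2}\|_{1,2}$ directly.

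The more serious problem is the sign of $c$, which is not a free convention. Evaluating \eqref{constc} on $C_2$ gives $h_2=h_1+c$, which is \eqref{brel}. With this sign your step 4 produces
\[
\|f_{j,1}\|_{1,2}^2=2\pi\bigl(r_1^{1-2j}+e^{-c}r_2^{1-2j}\bigr),\qquad
\|f_{j,2}\|_{1,2}^2=2\pi\bigl(e^{c}r_1^{1-2j}+r_2^{1-2j}\bigr).
\]
Hence the coefficients are $r_2^{1-2j}/(e^{c}r_1^{1-2j}+r_2^{1-2j})$ in \eqref{22form} and $(r_1r_2)^{\frac12-j}/(e^{c/2}r_1^{1-2j}+e^{-c/2}r_2^{1-2j})$ in \eqref{12form}. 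These are the printed formulas with $c$ replaced by $-c$.

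The printed signs come out only if $h_2=h_1-c$. That is the convention tacitly used in the norm table of Section \ref{sec1} and in \eqref{11form}. It contradicts \eqref{brel}, Lemma \ref{boll} as stated in the introduction, and \eqref{expheine}. Indeed, with $h_2=h_1+c$ one gets $S_{1,2}(p,p)=\frac1{2\pi}|\phi_2'(p)|\sqrt{\Delta Q(p)}\,\mu$ for $p\in C_2$, exactly as Theorem \ref{mcor1} requires.

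So your assertion that the computation ``yields exactly'' the denominator of \eqref{12form}, with the sign fixed from \eqref{constc}, is false. The promised termwise agreement with \eqref{11form} as printed would fail for the same reason. Writing $\pm$ hides a genuine sign inconsistency in the statement rather than resolving it. Commit to $h_2=h_1+c$ and record that the lemma holds with $c\mapsto -c$, or state explicitly that you normalize $c$ the opposite way.
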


\section{Wavefunctions and their approximations} \label{secprep}

Recall the representation $K_n(z,w)=\sum_0^{n-1}e_{j,n}(z)\overline{e_{j,n}(w)}$, the wavefunction $e_{j,n}(z)$ being given by \eqref{wavefunction}.

We are particularly interested in asymptotics when both $z$ and $w$ are in $\calB_1\cup \Ext C_1$. 
In this case, terms in \eqref{ck} with $j\le n-C\sqrt{n\log n}$ give a negligible contribution, and the sum can be approximated by just summing over $j$:s with $j=n+\bigO(\sqrt{n\log n})$.

\begin{lem} \label{zoff} If $j\le n-C\sqrt{n\log n}$ where the constant $C$ is large enough, then for each $N>0$ there is a constant $K_N$ such that 
$$|e_{j,n}(z)|\le K_Nn^{-N},\qquad (z\in\calB_1\cup \Ext C_1).$$
\end{lem}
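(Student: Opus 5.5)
Our plan is to combine a pointwise bound for weighted polynomials with a maximum-principle argument on $\Ext C_1$, using that the relevant obstacle function for degree $j$ polynomials grows strictly slower than $\check Q$ when $j/n<1$.

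\emph{Step 1: a pointwise bound.} Write $e_{j,n}=\gamma_{j,n}p_{j,n}e^{-nQ/2}$, so that $\|e_{j,n}\|_{L^2}=1$. The standard subharmonicity estimate for weighted polynomials (cf.\ \cite{AHM}) gives
$$|p(z)|^2e^{-nQ(z)}\le Cn\,\|p\,e^{-nQ/2}\|_{L^2}^2$$
for $z$ in a fixed neighbourhood of $S^*$, since $Q$ is $C^2$ there. Hence $|e_{j,n}|^2\le Cn\,e^{-n(Q-\check Q)}$ on $S^*$, and in particular $|p_{j,n}|^2\gamma_{j,n}^2e^{-n\check Q}\le Cn$ there.

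\emph{Step 2: propagation to $\Ext C_1$.} Consider
$$\log\bigl(|p_{j,n}|^2\gamma_{j,n}^2\bigr)-2j\log|\phi_1|-(n-j)\,\check Q \;-\; j\,\re q_1.$$
By \eqref{fundid}, on $\Ext C_1$ we have $\check Q=2\log|\phi_1|+\re q_1$. Compare $\log|p_{j,n}|^2$ with the function $j\check Q+\log(Cn\,\gamma_{j,n}^{-2})$ restricted to $\Ext C_1$. The difference is subharmonic on $\Ext C_1$, bounded at $\infty$ because $\deg p_{j,n}=j$ and $\check Q\sim 2\log|z|$, and bounded by the value coming from Step 1 on $C_1$. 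A cleaner route is to apply the maximum principle to
$$\log|p_{j,n}|^2-j\cdot 2\log|\phi_1|-n\,\re q_1.$$
This is subharmonic on $\Ext C_1$, and at $\infty$ it equals $2j\log r_1-n\re q_1(\infty)+O(1)$. On $C_1$ it is at most $n\check Q+\log(Cn\gamma_{j,n}^{-2})-n\re q_1=\log(Cn\gamma_{j,n}^{-2})$. Therefore on $\Ext C_1$
$$|e_{j,n}(z)|^2\le Cn\,|\phi_1(z)|^{2j}e^{n\re q_1(z)-nQ(z)}=Cn\,e^{-n(Q-\check Q)(z)}\,|\phi_1(z)|^{-2(n-j)}.$$
The same inequality extends to $\calB_1$: across $C_1$ we use the harmonic continuation of $\phi_1$ and $q_1$, together with Step 1 directly inside $S$. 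There $|\phi_1|\ge 1-O(\sqrt{\log n/n})$ and $Q\ge\check Q$.

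\emph{Step 3: conclusion.} For $z\in\Ext C_1$ we have $|\phi_1(z)|\ge1$ and $Q\ge\check Q$, so the factor $|\phi_1|^{-2(n-j)}$ does not help by itself. This is the main obstacle: the decay must come from the fact that points on $C_1$ and outside are in the tail for low-degree polynomials. We therefore sharpen Step 1 on $C_1$ by using that $p_{j,n}$ is also orthogonal with respect to the scaled potential. Set $\tau=j/n<1$. Then $e_{j,n}$ has norm $\le 1$ in $L^2(e^{-nQ})$, hence $|p_{j,n}|^2\gamma_{j,n}^2e^{-n\tau\check Q_\tau}\cdot e^{-n(Q-\tau \check Q_\tau)}\le Cn$, where $\check Q_\tau$ is the obstacle function of $Q/\tau$, with droplet $S_\tau\subset S$. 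Thus
$$|e_{j,n}|^2\le Cn\,e^{-n(Q-\tau\check Q_\tau)}.$$
Near $C_1$ the function $Q-\tau\check Q_\tau$ is at least $c(1-\tau)^2$, since $S_\tau$ lies at distance $\asymp 1-\tau$ inside $C_1$ and quadratic growth holds as in \eqref{bock}. Away from $C_1$ in $\Ext C_1$, it is bounded below by $Q-\check Q+c(1-\tau)$, because $\check Q-\tau\check Q_\tau$ is harmonic with positive boundary values there and tends to $2(1-\tau)\log|z|$. In the range $1-\tau\ge C\sqrt{\log n/n}$ this yields $|e_{j,n}|^2\le Cn\,e^{-cC^2\log n}\le K_Nn^{-N}$ once $C$ is chosen large depending on $N$. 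The remaining work, which we expect to be the hardest part, is verifying the uniform quadratic lower bound $Q-\tau\check Q_\tau\ge c(1-\tau)^2$ on $\calB_1\cup\Ext C_1$, including near the outpost $C_2$. We would approach it via the Hele-Shaw (Laplacian growth) monotonicity of $S_\tau$ and the strict subharmonicity of $Q$ near $C_1\cup C_2$.
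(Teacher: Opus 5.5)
\textbf{There is a gap at the decisive step.} The estimate $|e_{j,n}|^2\le Cn\,e^{-n(Q-\tau\check Q_\tau)}$ in Step 3 is never actually derived. The inequality you write after ``hence'' is literally $|e_{j,n}|^2\le Cn$, because $e^{-n\tau\check Q_\tau}e^{-n(Q-\tau\check Q_\tau)}=e^{-nQ}$. So it is just Step 1 again, valid only near $S^*$, and the ``Thus'' does not follow. Neither the $L^2$ normalization nor orthogonality with respect to a rescaled weight produces the extra factor $e^{-n(Q-\tau\check Q_\tau)}$.

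What produces it is the degree, $\deg p_{j,n}=j=\tau n$, combined with the maximum principle relative to $S_\tau$. Set $s=\frac 1 n\log\bigl(\gamma_{j,n}^2|p_{j,n}|^2/(Cn)\bigr)$. This function is subharmonic, satisfies $s\le Q=\tau\check Q_\tau$ on $S_\tau\subset S$ by Step 1, and satisfies $s\le 2\tau\log|z|+\bigO(1)$ at infinity. Since $\tau\check Q_\tau$ is harmonic on $\C\setminus S_\tau$ with the same logarithmic growth, $s-\tau\check Q_\tau$ is subharmonic and bounded above on $\hat{\C}\setminus S_\tau$, with boundary values $\le 0$. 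Hence it is $\le 0$ everywhere. Equivalently, run your Step 2 starting from $\d S_\tau$, with $\phi_\tau$ and $q_\tau$ in place of $C_1$, $\phi_1$, $q_1$. Starting the maximum principle at $C_1$ throws away the degree deficit exactly where it is needed, which is why Step 2 gave nothing on $C_1$.

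Once this is fixed, your architecture coincides with the standard proof that the paper merely cites (Lemma 3.6 of \cite{AC}). That proof consists of an $L^2\to L^\infty$ bound near the droplet, the global bound above, and a separation estimate on $\calB_1\cup\Ext C_1$.

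\textbf{The separation estimate is also left open,} although the ingredients are standard.

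For $\tau\le\tau_0<1$, use monotonicity: $\tau\check Q_\tau$ increases with $\tau$, since the envelope class only grows. Hence $Q-\tau\check Q_\tau\ge Q-\tau_0\check Q_{\tau_0}\ge c_0>0$ on $\calB_1\cup\Ext C_1$, which gives exponential decay.

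For $\tau_0\le\tau\le 1-C\sqrt{\log n/n}$, combine two facts. First, $\dist(\d S_\tau,C_1)\ge c(1-\tau)$, by Laplacian growth with $\Delta Q>0$ near $C_1$. Second, $Q-\tau\check Q_\tau$ grows quadratically off $S_\tau$, uniformly in $\tau$. Take $C$ large compared to $M$, so that the part of $\calB_1$ inside $S$ stays at distance at least $c(1-\tau)$ from $S_\tau$. This gives $Q-\tau\check Q_\tau\ge c(1-\tau)^2\ge cC^2\log n/n$ there.

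The outpost causes no extra difficulty, contrary to your concern. The function $\check Q-\tau\check Q_\tau$ is harmonic on $\Ext C_1$, is at least $c(1-\tau)^2$ on $C_1$, and behaves like $2(1-\tau)\log|z|$ at $\infty$. By the minimum principle it is therefore at least $2(1-\tau)\log|\phi_1|+c(1-\tau)^2$ throughout $\Ext C_1$, including near $C_2$. Your own ``away from $C_1$'' remark essentially already shows this. With these two repairs the argument is complete.
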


The proof is standard, e.g.~see \cite[Lemma 3.6]{AC}.

\smallskip

When the points $z,w\in \calB_1\cup \Ext C_1$, the essential contribution to the sum \eqref{ck} comes from indices $j$
 with $n-M\sqrt{n\log n}\le j\le n-1$. These indices constitute the \textit{edge regime}. 
 
 As observed in \cite{AC}, in the present setting a further \textit{bifurcation regime} is prevalent for indices $j$ with $ n-\log^2 n\le j\le n-1$. 
 
 To account for all possibilities, we now state two partially overlapping approximation formulas for the wavefunctions $e_{j,n}(z)$, to be used in tandem in what follows.

We first recall the more well known approximation formula used for connected coincidence sets going back to \cite{HW}. 

For all $j$ with $n-M\sqrt{n\log n}\le j\le n-1$ we put $\tau=j/n$ and consider the potential $Q_\tau=Q/\tau$. Let $S_\tau=S[Q/\tau]$ be the $\tau$-droplet. These droplets form an increasing chain: $\tau<\tau'$ implies $S_{\tau}\subset S_{\tau'}$.

Also (at least for $n$ large enough) $\d S_\tau$ is a real-analytic Jordan curve, see e.g.~\cite{AC1,GrM} and references therein. Let $U_\tau:=\hat{\C}\setminus S_\tau$ be the exterior domain and
$$\phi_\tau:U_\tau\to\D_*$$
the exterior conformal map with $\phi_\tau(\infty)=\infty$ and $\phi_\tau'(\infty)>0$.

We also require the two analytic functions $q_\tau(z)$ and $h_\tau(z)$ on $U_\tau$ which solve the boundary value problems
\begin{align}\re q_\tau=Q,\qquad \re h_\tau=\frac 1 2\log\Delta Q,\qquad \text{along}\quad \d S_\tau,\end{align}
normalized by mapping $\infty\mapsto \infty$ and being real at $\infty$.

To state our next result, we also introduce the ``obstacle function'' $\check{Q}_\tau(z)$. By definition $\check{Q}_\tau(z)$ is the pointwise supremum over $s(z)$ where $s(w)$ is a subharmonic function on $\C$ such that $s(w)\le 2\tau\log|w|+\bigO(1)$ as $w\to\infty$. The coincidence set $S_\tau^*:=\{Q_\tau=\check{Q}_\tau$ satisfies $S_\tau^*=S_\tau$ when $\tau<1$ is close enough to $1$; also $\check{Q}_\tau(z)=2\tau\log|z|+\bigO(1)$ as $z\to\infty$.

We now define an approximation (or ``weighted quasi-polynomial'') $E_{j,n}(z)$ by
\begin{align}\label{qp1}E_{j,n}(z):=\bigg(\frac n {2\pi}\bigg)^{\frac 1 4}\sqrt{\phi_\tau'(z)}\cdot\phi_\tau(z)^j\cdot e^{-\frac n 2(Q-q_\tau)(z)}\cdot e^{\frac 1 2 h_\tau(z)}.
\end{align}

The exterior approximation ``$e_{j,n}\approx E_{j,n}$'' works well as long as $j$ does not enter the bifurcation regime.

\begin{lem}\label{classlem} Suppose that $j$ satisfies $n-C\sqrt{n\log n}\le j\le n-\log^2 n$. 
Then for all $z\in\calB_1\cup \Ext C_1$ we have the estimate
\begin{equation}|e_{j,n}(z)-E_{j,n}(z)|\le K\sqrt{\log n}\cdot e^{-\frac n 2 (Q-\check{Q}_\tau)(z)},\end{equation}
where $K$ is a large enough constant (depending only on $Q$).
\end{lem}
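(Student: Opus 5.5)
We follow the standard Hedenmalm--Wennman scheme for orthogonal polynomials on connected droplets \cite{HW}, adapted as in \cite{AC1,AC}. The key point is that for $\tau=j/n$ with $n-C\sqrt{n\log n}\le j\le n-\log^2 n$, the coincidence set of $Q_\tau=Q/\tau$ is still the connected droplet $S_\tau$. The outpost $C_2$ is not yet charged at this level, and its influence is only exponentially small. The reason is that $\check{Q}_\tau$ lies strictly below $Q/\tau$ near $C_2$ by an amount of order $(1-\tau)$, and $n(1-\tau)\ge\log^2 n$. So we treat $e_{j,n}$ as the $j$:th weighted orthogonal polynomial for the potential $nQ=j\cdot Q_\tau$ and compare it with the quasi-polynomial $E_{j,n}$ in \eqref{qp1}.

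\emph{Step 1: construct and test the quasi-polynomial.} We localize $E_{j,n}$ with a smooth cut-off $\chi$ which equals $1$ on a fixed neighbourhood of $\overline{\Ext}\,\d S_\tau$ and vanishes deep inside $S_\tau$; note that $\phi_\tau$, $q_\tau$, $h_\tau$ extend analytically a fixed distance across $\d S_\tau$ by real-analyticity. Let $F=\chi E_{j,n}$. Then $\|F\|^2=1+\bigO(n^{-1/2})$, by a Laplace-method computation in the normal direction to $\d S_\tau$. Here we use
\[(Q-\check{Q}_\tau\tau^{-1}\cdots)\approx 2\Delta Q\cdot\dist^2\]
near the boundary, together with the identity $\check{Q}_\tau=2\tau\log|\phi_\tau|+\re q_\tau$ outside $S_\tau$. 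The factor $e^{h_\tau}$ is designed precisely to cancel the Gaussian width $\sqrt{\pi/(2n\Delta Q)}$, since $\re h_\tau=\frac12\log\Delta Q$. This gives the orthogonality relations $\langle F,p\,e^{-nQ/2}\rangle=\bigO(n^{-1/2})\|p\,e^{-nQ/2}\|$ for $p\in\Pol(j-1)$. These follow by Cauchy's theorem applied to $p/(\phi_\tau^{j}\cdots)$, which is analytic at infinity with a zero there, after pulling back via the flow coordinates of \cite{HW}. In fact, following \cite{HW}, we would use corrections $E_{j,n}^{(K)}$ with higher-order terms $n^{-k}$ to push these errors to $\bigO(n^{-K})$. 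The statement only needs an error of size $\sqrt{\log n}$ times the exterior decay, so the leading term plus one correction suffices.

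\emph{Step 2: $\bar\partial$-correction.} Write $e_{j,n}$ as the normalized projection of $F$ onto $\calW_j$ modulo $\Pol(j-1)$. The error $F-\Pi F$ is controlled by Hörmander's $L^2$ estimate for $\bar\partial u=\bar\partial F$ with weight $nQ$, modified to a weight of growth $2j\log|z|$, for example $j\check Q_\tau+\epsilon$-regularization. The term $\bar\partial F=(\bar\partial\chi)E_{j,n}$ is supported inside $S_\tau$, where $E_{j,n}$ is exponentially small since $|\phi_\tau|<1$ there. This gives an $L^2$-error of $\bigO(n^{-N})$. The outpost enters here: the Hörmander weight must be subharmonic with the correct growth, and we take $n\check{Q}_\tau$. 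The difference between weights, $Q-\check Q_\tau$, is large near $C_2$ (at least of order $(1-\tau)\gtrsim \log^2 n/n$ after multiplication by $n$), so contributions near $C_2$ are suppressed.

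\emph{Step 3: $L^2$ to pointwise.} We convert the $L^2$ estimate for $e_{j,n}-E_{j,n}$ into a pointwise bound. We apply the standard sub-mean-value estimate to $|p|^2e^{-n\check Q_\tau}$, which is subharmonic up to a bounded factor on discs of radius $n^{-1/2}$, where $p$ is the polynomial part, and use $|f|^2e^{-nQ}=|f|^2e^{-n\check Q_\tau}e^{-n(Q-\check Q_\tau)}$. This yields the factor $e^{-\frac n2(Q-\check Q_\tau)(z)}$ and a loss of $\sqrt n$ against the $L^2$ error. The normalization mismatch between $\|F\|$ and $1$ and the leading coefficient contribute the $\sqrt{\log n}$: near the belt $\calB_1$ only $\bigO(n^{-1/2})$ relative errors appear, but uniformity over $|1-\tau|\le C\sqrt{\log n/n}$ costs logarithmic factors in the Laplace method.

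\emph{Main obstacle.} The main difficulty is Step 2 near the bifurcation threshold. As $j\to n-\log^2 n$, the gap $Q/\tau-\check Q_\tau$ on $C_2$ shrinks to order $\log^2 n/n$. We must verify that $e^{-n\cdot\mathrm{gap}}\lesssim e^{-c\log^2 n}$ still beats all polynomial losses, uniformly in $j$. We would first check this by a Taylor expansion of $\check Q_\tau$ in $\tau$ using $\partial_\tau\check Q_\tau=2\log|\phi_\tau|+\dots$, which is positive on $C_2$ since $|\phi_1|=r_2/r_1>1$ there.
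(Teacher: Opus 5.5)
Your proposal is correct and is essentially the paper's argument. The paper likewise runs the standard Hedenmalm--Wennman/Ameur--Cronvall scheme with H\"ormander estimates, and it cites \cite[Lemma 3.7]{AC} for the negligibility of the outpost when $j\le n-\log^2 n$; that citation is exactly your observation that $n(Q-\check{Q}_\tau)\gtrsim n(1-\tau)\ge\log^2 n$ near $C_2$. One small correction: this gap is needed in Step 1, to discard the mass of $E_{j,n}$ near $C_2$ in the norm and approximate-orthogonality integrals, rather than in the $\bar\partial$-step, where $Q\ge\check{Q}_\tau$ alone suffices.
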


\begin{proof}
 For $j\le n-\log^2 n$ the effect of the outpost is negligible by \cite[Lemma 3.7]{AC}, i.e., if $\calE$ is an arbitrarily small neighbourhood of the droplet $S$, then $e_{j,n}(z)$ as well as $E_{j,n}(z)$ are uniformly $\bigO(n^{-N})$ on $\C\setminus\calE$ for any given $N>0$. When applying the standard approximation scheme in \cite{HW,AC1} (using ``H\"{o}rmander estimates'') it is easy to see that the approximation with $E_{j,n}(z)$ works virtually unaltered. We omit details. \footnote{It is equally easy to see that the approximation procedure works for the higher order approximants in \cite{HW}. However, we shall here merely be concerned with the first-order approximation by $E_{j,n}(z)$.}
\end{proof}

For $j$ in the bifurcation regime, an exterior approximation
formula for the wavefunction $e_{j,n}(z)$ is deduced in \cite[Section 3.4]{AC}. We now recall this formula.

Recall that $\phi_k:\Ext C_k\to\D_*$, $k=1,2$ are the exterior conformal mappings.

For $j$ close to $n$ we consider the holomorphic function in $\calB_1\cup \Ext C_1$
$$\Phi_{j,n}(z)=\frac {r_1^{j+1/2}}{e^{\frac n 2 q_1(\infty)+\frac 1 2 h_1(\infty)}}\sqrt{\phi_1'(z)}\phi_1(z)^je^{\frac n 2 q_1(z)}e^{\frac 1 2 h_1(z)}.$$
Here $q_1,h_1$ are holomorphic functions in $\Ext C_1$ with
$$\re q_1=Q,\qquad \re h_1=\frac 1 2\log\Delta Q\qquad \mathrm{along}\quad C_1,$$
fixed by the condition that $q_1$ and $h_1$ are real at infinity.

As shown in \cite[Section 2]{AC}, we have for $z\in\Ext C_2$
$$\Phi_{j,n}(z)=\frac {r_2^{j+1/2}}{e^{\frac 1 2 nq_2(\infty)+\frac 1 2 h_2(\infty)}}\sqrt{\phi_2'(z)}\phi_2(z)^je^{\frac 1 2 nq_2(z)}e^{\frac 1 2 h_2(z)}.$$
Here $q_2,h_2$ are the holomorphic on $\Ext C_2$ solving the boundary problems
$$\re q_2=Q,\qquad \re h_2=\frac 1 2\log \Delta Q,\qquad \mathrm{along}\quad C_2,$$
and real at infinity.

It is easy to see (cf.~\cite[Section 1.3]{AC}) that
\begin{equation}\label{brel}h_2=h_1+c\end{equation}
where $c$ is the constant in \eqref{constc}.

Let us also set 
$$c_{j,n}:=\sqrt {\frac {2\pi} {n}}(r_1^{2j+1}e^{-nq_1(\infty)-h_1(\infty)}+r_2^{2j+1}e^{-nq_2(\infty)-h_2(\infty)})$$
and 
\begin{equation}\label{mp}F_{j,n}(z):=c_{j,n}^{-\frac 12}\Phi_{j,n}(z)e^{-\frac 12 nQ(z)}.\end{equation}

\begin{lem} \label{usel} Suppose that $n-\log^2 n\le j\le n-1$. Then for all $z\in \calB_1\cup \Ext C_1$ we have
the estimate
$$|e_{j,n}(z)-F_{j,n}(z)|\le K\sqrt{\log n}\cdot e^{-\frac 1 2 n(Q-\check{Q})(z)},$$
where $K$ is a large enough constant depending only on $Q$.
\end{lem}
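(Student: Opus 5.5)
The plan is to follow the standard quasipolynomial--H\"{o}rmander scheme of \cite{HW,AC1}, adapted to the two-component coincidence set as in \cite[Section 3.4]{AC}.

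\emph{Step 1: $F_{j,n}$ is almost normalized and almost orthogonal to lower degrees.} Set $P=\Phi_{j,n}\cdot\chi$, where $\chi$ is a smooth cutoff equal to $1$ on a neighbourhood of $C_1\cup\Ext C_1$ (minus a small disc about $\infty$ handled by the polynomial growth) and supported where $\phi_1$ continues analytically. Since $\Phi_{j,n}$ grows like $z^j$ at infinity with leading coefficient $r_1^{j+1/2}\cdot r_1^{-j-1/2}\cdots$ normalized so that $\Phi_{j,n}(z)=z^j(1+\bigO(1/z))$, it is a candidate monic approximant. Compute $\int|P|^2e^{-nQ}\,dA$ by Laplace's method in the normal direction to $C_1$ and to $C_2$ separately. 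Near $C_k$, using $Q-\check Q\asymp \dist^2$ from \eqref{bock}, \eqref{fundid}, and the identity $|\Phi_{j,n}|^2e^{-nQ}=(\text{const})\,|\phi_k'|\,|\phi_k|^{2(j-n)}e^{-n(Q-\check Q)}\sqrt{\Delta Q}$ along $C_k$, the Gaussian integral yields exactly $\sqrt{2\pi/n}\,r_k^{2j+1}e^{-nq_k(\infty)-h_k(\infty)}\cdot(1+\bigO(n^{-1/2}\log n))$ for the $C_k$-contribution, the factor $|\phi_k|^{2(j-n)}$ being harmless because $n-j\le\log^2 n$ and the belt width is $\sqrt{\log n/n}$. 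Summing gives $\|P\|^2_{nQ}=c_{j,n}(1+o(1))$; this is precisely why $c_{j,n}$ has two terms. Contributions away from $C_1\cup C_2$ (inside $S$ and in the shallow region) are handled by the growth estimate $|\Phi_{j,n}|^2e^{-nQ}\le e^{-n(Q-\check Q)}$ times polynomial factors.

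\emph{Step 2: correction by H\"{o}rmander.} Write $p_{j,n}=P-u$ where $u$ is the $L^2(e^{-nQ})$-minimal solution of $\dbar u=\dbar P=\Phi_{j,n}\dbar\chi$, constrained to be $\bigO(z^{j-1})$ at infinity. Since $\dbar\chi$ is supported in the bulk of $S$ (inside $C_1$ at distance $\ge\delta$), where $|\Phi_{j,n}|^2e^{-nQ}$ is exponentially small relative to $c_{j,n}$, H\"{o}rmander's $L^2$ estimate with weight $n\check Q_\tau$-type (strictly subharmonic near $S$, corrected by $\log(1+|z|^2)$ to fix degree) gives $\|u\|_{nQ}\le e^{-cn}\sqrt{c_{j,n}}$. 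Orthogonality of $P-u$ to $\Pol(j-1)$ follows from minimality exactly as in \cite{HW}. Hence $\gamma_{j,n}=c_{j,n}^{-1/2}(1+o(1))$ and $e_{j,n}-F_{j,n}=c_{j,n}^{-1/2}(-u)e^{-nQ/2}+\text{(normalization error)}\cdot F_{j,n}$.

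\emph{Step 3: pointwise bounds.} Convert the $L^2$ bound on $u$ to a pointwise bound on $\calB_1\cup\Ext C_1$: $u$ is holomorphic there, so the subharmonic mean-value estimate on discs of radius $n^{-1/2}$ gives $|u|^2e^{-nQ}(z)\lesssim n\,e^{-n(Q-\check Q)(z)}\|u\|^2$ (using that $\check Q$ is the maximal subharmonic minorant, i.e.\ a Bernstein--Walsh type bound). The normalization error in Step 1 is $\bigO(n^{-1/2}\log n)$ relatively, while $|F_{j,n}|\lesssim n^{1/4}e^{-\frac n2(Q-\check Q)}$; the product is $\le K\sqrt{\log n}\,e^{-\frac n2(Q-\check Q)}$. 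The main obstacle is Step 1's precise two-curve Laplace computation with uniform error terms for all $j$ in $[n-\log^2n,n-1]$ — notably controlling that the relative weights of $C_1$ and $C_2$ contributions (ratio $(r_2/r_1)^{2j+1}e^{-n(q_2-q_1)(\infty)-c}$, of order $(r_2/r_1)^{2(j-n)}$) are captured exactly, rather than only up to $1+o(1)$ factors that would spoil the $\sqrt{\log n}$ bound; I would first verify via \cite[Lemma 2.1]{AC} that $e^{-n(q_2-q_1)(\infty)}=(r_1/r_2)^{2n}$, reducing this to an explicit geometric factor.
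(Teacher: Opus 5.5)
The paper does not prove this lemma itself; it simply quotes \cite[Theorem 3.5]{AC}. Your overall architecture (two-curve quasipolynomial, $\dbar$-correction, Bernstein--Walsh) is the right one. Step 1 is essentially correct: the $C_1$- and $C_2$-Gaussians give exactly the two terms of $c_{j,n}$. One small correction there: on the inner side of $C_1$ the Gaussian comes from $Q-V$, with $V$ the harmonic continuation of $\check Q$, not from $Q-\check Q$, which vanishes on $S$.

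\textbf{The gap is in Step 2.} Orthogonality of $P-u$ to $\Pol(j-1)$ does not follow from minimality of $u$.
\begin{itemize}
\item The admissible solutions of $\dbar u=\dbar P$ with $u=\bigO(z^{j-1})$ are exactly $u=P-p$ with $p\in z^j+\Pol(j-1)$.
\item Minimizing $\|u\|_{nQ}$ forces $u\perp\Pol(j-1)$. Hence $\langle P-u,q\rangle_{nQ}=\langle P,q\rangle_{nQ}$ for $q\in\Pol(j-1)$.
\item By contrast, $p_{j,n}$ minimizes $\|p\|_{nQ}$ over the same affine space. The two minimizers coincide only if $\chi\Phi_{j,n}\perp\Pol(j-1)$, which is false.
\end{itemize}
In fact $p_{j,n}=(P-u)-\Pi_{j-1}(\chi\Phi_{j,n})$, where $\Pi_{j-1}$ is the orthogonal projection onto $\Pol(j-1)$. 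Two things you need are therefore governed by the orthogonality defect of the quasipolynomial: the error $e_{j,n}-F_{j,n}$, and the lower bound on $\|p_{j,n}\|_{nQ}$ behind $\gamma_{j,n}\approx c_{j,n}^{-1/2}$ (only $\|p_{j,n}\|\le\|P-u\|$ comes for free). Your proposal never estimates this defect. For the same reason, the ``main obstacle'' you name is not the real one: relative errors of size $n^{-1/2}\log n$ in $\|\chi\Phi_{j,n}\|^2$ are harmless, as your own Step 3 shows.

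\textbf{What is missing} is a bound $|\langle\chi\Phi_{j,n},q\rangle_{nQ}|\le K\sqrt{\log n/n}\,\sqrt{c_{j,n}}\,\|q\|_{nQ}$ for all $q\in\Pol(j-1)$. Combined with the factor $\sqrt n$ lost in Bernstein--Walsh and the normalization $c_{j,n}^{-1/2}$, this is exactly what yields $K\sqrt{\log n}\,e^{-\frac n2(Q-\check Q)}$. One way to get it:
\begin{itemize}
\item Write $q=g\Phi_{j,n}$. Here $g$ is holomorphic near $C_1\cup\Ext C_1$ with $g(\infty)=0$, since $\Phi_{j,n}$ has no zeros there and $\deg q<j$. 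Then $\langle\chi\Phi_{j,n},q\rangle_{nQ}=\int\chi|\Phi_{j,n}|^2e^{-nQ}\,\bar g\,dA$.
\item Near each $C_k$, in the coordinate $\zeta=\phi_k(z)$, the measure $|\Phi_{j,n}|^2e^{-nQ}dA$ is, up to small relative errors, a Gaussian superposition of harmonic measures from $\infty$ of the curves $\{|\zeta|=1+s/(\sqrt n\,a_k(\arg\zeta))\}$, with $a_k=\sqrt{2\Delta Q}/|\phi_k'|$.
\item The identity $e^{\re h_k}=\sqrt{\Delta Q}$ on $C_k$ is precisely what makes the normal integral of this density independent of the position along $C_k$.
\item Since $\bar g$ integrates to zero against each such harmonic measure, and Cauchy--Schwarz controls the remainder, both belt contributions are small.
\end{itemize}
This works at $C_2$ only because $g$ is holomorphic on all of $\Ext C_1\supset\Ext C_2$, and because the compatibility conditions give $\phi_2=\frac{r_1}{r_2}\phi_1$ and $\re h_2=\frac12\log\Delta Q$ on $C_2$. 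That is the genuinely two-curve input of the proof, not the Laplace computation.
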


\begin{proof} This is precisely
\cite[Theorem 3.5]{AC}. 
\end{proof}

\section{Approximation of the correlation kernel} \label{thep} 
We now prove Theorem \ref{mth0}. The idea is inspired by the paper \cite{ACC}.

For $z,w\in \calB_1\cup\Ext C_1$ we consider the approximation of $K_n(z,w)$ by the sum $\Sigma_1^{(n)}(z,w)+\Sigma_{1,2}^{(n)}(z,w)$
where
\begin{align}\Sigma_1^{(n)}(z,w):=\sum_{j=n-C\sqrt{n\log n}}^{n-1}E_{j,n}(z)\overline{E_{j,n}(w)}\end{align} 
and
\begin{align}\label{second}\Sigma_{1,2}^{(n)}(z,w):=\sum_{j=n-\log^2 n}^{n-1}(F_{j,n}(z)\overline{F_{j,n}(w)}-E_{j,n}(z)\overline{E_{j,n}(w)}).\end{align}

The first term $\Sigma_1^{(n)}$ requires no new analysis; it is easily handled using the method of proof of \cite[Theorem 1.3]{AC1}.

\begin{lem} Let $\beta$ be any number with $0<\beta<\frac 1 4$. Then for all $z,w\in\calB_1\cup \Ext C_1$ such that $|\phi_1(z)\overline{\phi_1(w)}-1|\ge \eta$ we have the estimate
\begin{align*}\Sigma_1^{(n)}(z,w)=\sqrt{2\pi n}\, &(u(z)\overline{u(w)})^ne^{-\frac n 2 (Q(z)+Q(w))} 
\cdot S_1(z,w)\cdot (1+\bigO(n^{-\beta}))
\end{align*}
as $n\to\infty$, where $S_1(z,w)$ is the Szeg\H{o} kernel \eqref{s1def}.
\end{lem}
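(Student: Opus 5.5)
We would follow the method of \cite[Theorem 1.3]{AC1}: write each summand $E_{j,n}(z)\overline{E_{j,n}(w)}$ in terms of the fixed objects $\phi_1,q_1,h_1$ attached to $C_1$ (i.e. $\tau=1$), plus explicit corrections in $1-\tau$, and then sum a geometric-type series. Put $j=n-k$ with $1\le k\le C\sqrt{n\log n}$ and $\tau=1-k/n$. The first step is a Taylor expansion of the $\tau$-dependent quantities about $\tau=1$, uniformly for $z$ in $\calB_1\cup\Ext C_1$. This is possible because $\d S_\tau$ moves real-analytically with $\tau$ (Hele-Shaw/Laplacian growth), so $\phi_\tau,q_\tau,h_\tau$ continue analytically across $C_1$ and depend smoothly on $\tau$. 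The key identity, from the Laplacian growth structure (the $\tau$-analogue of \eqref{udef}, \eqref{fundid}), is
$$\phi_\tau(z)^{\tau}e^{\frac12 q_\tau(z)}=\text{const}_\tau\cdot u(z)\,\big(1+\bigO(1-\tau)\big)\quad\text{locally},$$
together with $\frac{d}{d\tau}\big(\tau\log\phi_\tau+\tfrac12 q_\tau\big)=\log\phi_\tau+\text{const}$. This identity follows from the relation $\check Q_\tau=2\tau\log|\phi_\tau|+\re q_\tau$ and the fact that $\d_\tau\check Q_\tau$ is twice the Green function of $U_\tau$ with pole at infinity. Integrating from $\tau$ to $1$ gives
$$\phi_\tau(z)^{j}e^{\frac n2 q_\tau(z)}=u(z)^n\,\phi_1(z)^{-k}\,e^{-\frac n2 \gamma(\tau)}\big(1+\bigO(k^2/n)+\bigO(k^3/n^2)\big),$$
where $\gamma(\tau)$ is a real normalising constant. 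The second-order term in $1-\tau$ must be tracked: it produces a real Gaussian factor $e^{-ak^2/n}$, and a constant $r_1^{2k}$-type factor comes from $\Cap \d S_\tau$. Meanwhile $\sqrt{\phi_\tau'}\,e^{\frac12 h_\tau}=\sqrt{\phi_1'}\,e^{\frac12 h_1}(1+\bigO(k/n))$.

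The second step is to insert this into $\Sigma_1^{(n)}$. The constant factors combine with $\sqrt{n/2\pi}$ and a Riemann sum in $k/\sqrt n$ to produce exactly the normalisation $\sqrt{2\pi n}\cdot\frac1{2\pi}$. The normalisation of the leading factor is forced by comparison with the known diagonal/Ginibre case, where $\sum_k r^{2k}\cdots$ must reproduce the edge density. We arrive at
$$\Sigma_1^{(n)}(z,w)=\sqrt{\tfrac n{2\pi}}\,(u(z)\overline{u(w)})^n e^{-\frac n2(Q(z)+Q(w))}\sqrt{\phi_1'(z)}\overline{\sqrt{\phi_1'(w)}}e^{\frac12(h_1(z)+\overline{h_1(w)})}\sum_{k=1}^{C\sqrt{n\log n}}\frac{g_n(k)}{(\phi_1(z)\overline{\phi_1(w)})^{k}},$$
where $g_n(k)=1+\bigO(k^2/n)$ is smooth and real. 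Comparing with $\frac1{2\pi}\sum_{k\ge1}(\phi_1(z)\overline{\phi_1(w)})^{-k}=\frac1{2\pi}\frac{1}{\phi_1(z)\overline{\phi_1(w)}-1}$ recovers $S_1(z,w)$; the extra $\sqrt{2\pi n}$ versus $\sqrt{n/2\pi}$ bookkeeping is fixed by the constants above.

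The third step, which is the main obstacle, is controlling the errors uniformly under $|\phi_1(z)\overline{\phi_1(w)}-1|\ge\eta$. When $|\phi_1(z)\overline{\phi_1(w)}|\ge 1+\eta'$, the terms decay geometrically, so only $k\le \log^2 n$ matter. In that range the error $\bigO(k^2/n)$ is tiny, and the tail beyond $C\sqrt{n\log n}$ is negligible. For $z,w$ both near $C_1$, the product is nearly unimodular with argument bounded away from $0$. Here we sum by parts (Abel summation) against the smooth weights $g_n(k)$. The partial sums of $\zeta^{-k}$ are $\bigO(1/\eta)$, and the total variation of $g_n$ over the relevant range is $\bigO(1)$. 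This gives boundedness but not yet the relative error. To extract $n^{-\beta}$, we split at $k\approx n^{1/2-2\beta}$. For small $k$ we have $g_n-1=\bigO(n^{-4\beta})$. For large $k$ the weights vary slowly, with derivative $\bigO(k/n)$, so Abel summation gives a contribution of size $\bigO\big(\sum_{k\ge n^{1/2-2\beta}}k/n\big)$ paired with oscillation, which yields $\bigO(n^{-\beta})$ after using the Gaussian cutoff.

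Finally, one must check that $S_1(z,w)$ is bounded below relative to these errors under \eqref{uni1}, so that the additive errors can be written as the relative factor $1+\bigO(n^{-\beta})$. This holds because $S_1(z,w)$ is nonvanishing there (its denominator is bounded and its numerator is nonzero). The factor $e^{-\frac n2(Q-\check Q_\tau)}$ versus $e^{-\frac n2(Q-\check Q)}$ discrepancy is absorbed in the expansions above.
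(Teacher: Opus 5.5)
Your overall strategy matches the paper's, which simply cites the summation-by-parts argument of \cite{AC1}: rewrite each term in terms of the $\tau=1$ data, pull out $(u(z)\overline{u(w)})^n$, and sum by parts in $k=n-j$.

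Your bookkeeping in Steps 1--2 is off, though. Put $\zeta=\phi_1(z)\overline{\phi_1(w)}$, $\tau=1-k/n$ and $P_\tau=\tau\log\big[\tfrac{\phi_\tau}{\phi_1}e^{\frac1{2\tau}(q_\tau-q_1)}\big]$. Then exactly $\phi_\tau^{\,j}e^{\frac n2 q_\tau}=u^n\phi_1^{-k}e^{nP_\tau}$, so
\[ \Sigma_1^{(n)}(z,w)=\sqrt{\tfrac n{2\pi}}\,(u(z)\overline{u(w)})^ne^{-\frac n2(Q(z)+Q(w))}\sqrt{\phi_1'(z)}\,\overline{\sqrt{\phi_1'(w)}}\,e^{\frac12(h_1(z)+\overline{h_1(w)})}\sum_{k=1}^{N}\zeta^{-k}g_n(k), \]
with $N=C\sqrt{n\log n}$ and $g_n(k)=e^{n(P_\tau(z)+\overline{P_\tau(w)})}(1+\bigO(k/n))$. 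There is no $r_1^{2k}$ factor and no Riemann sum. The relation $\sqrt{n/2\pi}=\sqrt{2\pi n}/(2\pi)$ is an identity, so nothing needs to be ``fixed by comparison with Ginibre''. The weight is complex, since $nP_\tau=\frac{k^2}{2n}\partial_\tau\log\phi_\tau|_{\tau=1}+\bigO(k^3/n^2)$. What you actually use is $|g_n(k)|\lesssim e^{-ak^2/n}$ (from $\re P_\tau\le-a(1-\tau)^2$) together with derivative bounds. The statement $g_n=1+\bigO(k^2/n)$ carries no information once $k\gtrsim\sqrt n$.

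\textbf{The genuine gap is Step 3.} One Abel summation gives
$\sum_{k=1}^N\zeta^{-k}g_n(k)=\frac{g_n(1)-\zeta^{-N}g_n(N)}{\zeta-1}+\frac1{\zeta-1}\sum_{k<N}\zeta^{-k}\Delta g_n(k)$.
Bounding the last sum by the total variation of $g_n$ only gives $\bigO(1)$, the same size as the main term. Your split at $K_0=n^{1/2-2\beta}$ does not repair this. The variation of $g_n$ on $k\ge K_0$ is comparable to $\sum_{k\ge K_0}(k/n)e^{-ak^2/n}\to 1/(2a)$, not $\bigO(n^{-\beta})$. The phrase ``paired with oscillation'' names exactly the step you never carry out.

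What is needed is a second summation by parts on the remainder. One has $|\Delta^2g_n(k)|\lesssim(n^{-1}+k^2n^{-2})e^{-ak^2/n}$, hence $\sum_k|\Delta^2 g_n(k)|=\bigO(n^{-1/2})$. Combined with $g_n(1)=1+\bigO(1/n)$ and $\Delta g_n(1)=\bigO(1/n)$, this yields $\sum_k\zeta^{-k}g_n(k)=\frac1{\zeta-1}\big(1+\bigO(n^{-1/2})\big)$, uniformly for $|\zeta|\ge1$, $|\zeta-1|\ge\eta$.

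Separately, your claim that the partial sums of $\zeta^{-k}$ are $\bigO(1/\eta)$ is false when $z$ or $w$ lies in $\calB_1\cap\Int C_1$. There $|\zeta|$ can be as small as $1-cM\sqrt{\log n/n}$, and $|\zeta|^{-k}$ reaches a positive power of $n$ for $k\le N$. In that part of the belt you must balance $|\zeta|^{-k}$ against the Gaussian decay of $g_n$. This means taking $C$ large relative to $M$ so the boundary term at $k=N$ is negligible, and estimating the Abel remainders with the weight $|\zeta|^{-k}$. Your proposal does not address this.
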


\begin{proof} The proof, using summation by parts, in \cite[Section 4]{AC1}, works unaltered in the present situation. 
\end{proof}

We turn to the term $\Sigma_{1,2}^{(n)}(z,w)$. We have the following lemma.

\begin{lem} \label{bok} Suppose that $z,w\in\calB_1\cup\Ext C_1$.
Then as $n\to\infty$,
\begin{align*}\Sigma_{1,2}^{(n)}(z,w)&=\sqrt{\frac  n {2\pi}}\sqrt{\phi_1'(z)}\overline{\sqrt{\phi_1'(w)}}(u(z)\overline{u(w)})^ne^{-\frac n 2 (Q(z)+Q(w))}
e^{\frac 1 2 (h_1(z)+\overline{h_1(w)})}\\
&\times  \sum_{j=1}^{\infty} \frac 1 {(\phi_1(z)\overline{\phi_1(w)})^j}\frac {e^{-c}r_2^{1-2j}}{r_1^{1-2j}+e^{-c}r_2^{1-2j}}\cdot \bigg(1+\bigO\bigg(\frac {\log^4 n} n\bigg)\bigg). 
\end{align*}
\end{lem}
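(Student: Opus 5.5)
The plan is to compute each summand $F_{j,n}(z)\overline{F_{j,n}(w)}-E_{j,n}(z)\overline{E_{j,n}(w)}$ explicitly for $n-\log^2 n\le j\le n-1$, reindex by $k=n-j\in\{1,\dots,\log^2 n\}$, and show that both products share the prefactor $(u(z)\overline{u(w)})^n\phi_1(z)^{-k}\overline{\phi_1(w)}^{-k}$ up to relative errors $\bigO(\log^4 n/n)$.

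\emph{Step 1: the $F$-terms.} From \eqref{mp} and the definition of $\Phi_{j,n}$, with $j=n-k$,
$$F_{j,n}(z)\overline{F_{j,n}(w)}=\frac{r_1^{2j+1}e^{-nq_1(\infty)-h_1(\infty)}}{c_{j,n}}\sqrt{\phi_1'(z)}\overline{\sqrt{\phi_1'(w)}}\,(\phi_1(z)\overline{\phi_1(w)})^{n-k}e^{\frac n2(q_1(z)+\overline{q_1(w)})}e^{\frac12(h_1(z)+\overline{h_1(w)})}e^{-\frac n2(Q(z)+Q(w))},$$
and $(\phi_1)^ne^{\frac n2 q_1}=u^n$ by \eqref{udef}. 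The ratio $r_1^{2j+1}e^{-nq_1(\infty)-h_1(\infty)}/c_{j,n}$ equals $\sqrt{n/2\pi}\,/\,(1+\rho_j)$, where
$$\rho_j=(r_2/r_1)^{2j+1}e^{-n(q_2-q_1)(\infty)-(h_2-h_1)(\infty)}.$$
I will use $h_2-h_1=c$ from \eqref{brel}, and I will determine $q_2(\infty)-q_1(\infty)$ from the identity $\phi_1e^{q_1/2}=\phi_2e^{q_2/2}$ at infinity. Comparing leading coefficients gives $e^{(q_2-q_1)(\infty)/2}=r_2/r_1$, so $e^{-n(q_2-q_1)(\infty)}=(r_1/r_2)^{2n}$. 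Hence $\rho_j=e^{-c}(r_2/r_1)^{1-2k}$, and
$$\frac{1}{1+\rho_j}=\frac{r_1^{1-2k}}{r_1^{1-2k}+e^{-c}r_2^{1-2k}}.$$
This term is exact, with no error.

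\emph{Step 2: the $E$-terms.} I must show that, for $k\le\log^2 n$,
$$E_{j,n}(z)\overline{E_{j,n}(w)}=\sqrt{\tfrac{n}{2\pi}}\sqrt{\phi_1'(z)}\overline{\sqrt{\phi_1'(w)}}(u\bar u)^n(\phi_1\bar\phi_1)^{-k}e^{\frac12(h_1+\bar h_1)}e^{-\frac n2(Q(z)+Q(w))}\big(1+\bigO(\log^4n/n)\big).$$
This is the main obstacle. It requires a perturbation analysis of the $\tau$-droplet quantities at $\tau=1-k/n$. Since $\tau\mapsto S_\tau$ is Laplacian growth (smooth in $\tau$ near $1$), I expect $\phi_\tau=\phi_1+\bigO(1-\tau)$ and $h_\tau=h_1+\bigO(1-\tau)$. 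The delicate part is the combination $\phi_\tau^{j}e^{\frac n2 q_\tau}$, whose exponent carries a factor $n$.

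Here I will use the Hele-Shaw identity $\check Q_\tau=2\tau\log|\phi_\tau|+\re q_\tau$ together with the analytic function $u_\tau=\phi_\tau^\tau e^{q_\tau/2}$. The derivative $\partial_\tau\log u_\tau=\log\phi_\tau$ follows from the standard fact that $\partial_\tau\check Q_\tau=2\log|\phi_\tau|$, i.e.\ the Green function with pole at infinity. Then
$$j\log\phi_\tau+\tfrac n2 q_\tau=n\log u_\tau=n\log u_1-k\log\phi_1+\bigO(k^2/n),$$
since the Taylor remainder is $n(1-\tau)^2\,\bigO(1)=\bigO(k^2/n)=\bigO(\log^4n/n)$. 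The normalizing constants at infinity combine, using $\phi_\tau'(\infty)=1/r_\tau$, to produce exactly the prefactor above.

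\emph{Step 3: subtract and sum.} The summand is the common prefactor times
$$\frac{r_1^{1-2k}}{r_1^{1-2k}+e^{-c}r_2^{1-2k}}-1=-\frac{e^{-c}r_2^{1-2k}}{r_1^{1-2k}+e^{-c}r_2^{1-2k}}.$$
This gives the stated series up to sign. The sign convention is fixed by matching with \eqref{extend}, and I would double-check it there. The multiplicative error $\bigO(\log^4n/n)$ is uniform in $k\le\log^2 n$.

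It remains to extend the sum from $k\le\log^2n$ to $k\le\infty$. For $z,w\in\calB_1\cup\Ext C_1$ we have $|\phi_1(z)\overline{\phi_1(w)}|\ge 1-\bigO(\sqrt{\log n/n})$, while the weights decay like $(r_1/r_2)^{2k}$. The product therefore decays geometrically, and the tail over $k>\log^2n$ is $\bigO(n^{-N})$ relative to the main term. This is absorbed into the error, provided the full series is not too small relative to its tail. If that is a concern, I would state the error additively instead.
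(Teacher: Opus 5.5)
Your outline matches the paper's except at one point. You evaluate $F_{j,n}(z)\overline{F_{j,n}(w)}$ exactly, using $h_2=h_1+c$ and $e^{q_1(\infty)-q_2(\infty)}=(r_1/r_2)^2$, which you correctly read off from \eqref{udef} at $\infty$ instead of quoting it. You then compare $E_{j,n}(z)\overline{E_{j,n}(w)}$ with the same prefactor. The difference is in how you prove \eqref{tos}.

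Your exponent $n\log u_\tau-n\log u_1+k\log\phi_1$ is exactly $nP_\tau$ in the paper's notation. The paper only invokes the one-sided bound $\re P_\tau\le -a(1-\tau)^2$ from \cite{AC1}. That bound controls the modulus of the ratio in \eqref{tos} from above, but does not by itself show that the ratio is $1+\bigO(\log^4 n/n)$. Your identity $\partial_\tau\log u_\tau=\log\phi_\tau$ gives the two-sided complex estimate
\[
P_\tau=-\int_\tau^1\log\frac{\phi_s}{\phi_1}\,ds=\bigO\big((1-\tau)^2\big),
\]
which is what is really needed. The integrand is single valued even though $u_\tau$ is not, because $\phi_s/\phi_1$ is zero-free and holomorphic at $\infty$. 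The identity itself holds because $\partial_\tau\check Q_\tau=2\log|\phi_\tau|$ and $\check Q_\tau=2\tau\log|\phi_\tau|+\re q_\tau$ give $\re\big(\tau\dot\phi_\tau/\phi_\tau+\tfrac12\dot q_\tau\big)=0$. This holomorphic function is real at $\infty$, hence vanishes identically. There are no constants at infinity in \eqref{qp1}, so nothing needs to ``combine''.

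On the sign, do not hedge: your computation is right, the lemma as printed has the wrong sign, and this is not a convention. For $z=w$ near $C_1$ the prefactor is positive and $|F_{j,n}(z)|^2=|E_{j,n}(z)|^2\,(1+\bigO(k^2/n))/(1+\rho_j)$ with $\rho_j>0$. Hence $\Sigma_{1,2}^{(n)}(z,z)<0$ for large $n$, while the printed right-hand side is positive.

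The paper's proof makes the same slip. The bracket in \eqref{hugo} equals $-\frac{e^{-c}r_2^{1-2k}}{r_1^{1-2k}+e^{-c}r_2^{1-2k}}+\bigO(k^2/n)$, but the next display carries a plus sign. With the minus sign, $\Sigma_1^{(n)}+\Sigma_{1,2}^{(n)}$ yields exactly the series of Lemma \ref{boll}, where $S_1$ is the genuine reproducing kernel of $H_1$ with denominator $\phi_1(z)\overline{\phi_1(w)}-1$. The printed \eqref{extend} carries the same sign error, and \eqref{s1def} has its denominator reversed. Matching against \eqref{extend} would therefore reproduce the mistake rather than fix it.

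Finally, your per-term error is not a multiplicative error uniform in $k$. It is relative to $E_{j,n}(z)\overline{E_{j,n}(w)}$, not to the difference, whose weight $\rho_j/(1+\rho_j)$ decays like $(r_1/r_2)^{2k}$. Since $|\phi_1(z)\overline{\phi_1(w)}|^{-k}=\bigO(1)$ for $k\le\log^2 n$ on $\calB_1\cup\Ext C_1$, what you and the paper actually obtain is the main term plus an additive error $\bigO(\log^6 n/n)$ times the prefactor. The tail $k>\log^2 n$ is harmless; the issue lies in these per-term errors.
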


\begin{proof}

By \cite[Lemma 2.1]{AC} we have
$$e^{q_1(\infty)-q_2(\infty)}=\bigg(\frac {r_1}{r_2}\bigg)^2.$$
Using also $h_2(\infty)=h_1(\infty)+c$, we now write
\begin{align}\Sigma_{1,2}^{(n)}(z,w)&=\sqrt{\frac n {2\pi}}\sqrt{\phi_1'(z)}\overline{\sqrt{\phi_1'(w)}}\cdot(\phi_1(z)\overline{\phi_1(w)})^n\cdot e^{\frac n 2(q_1(z)-Q(z))}e^{\frac n 2(\overline{q_1(w)}-Q(w))}e^{\frac 1 2(h_1(z)+\overline{h_1(w)})}\nonumber\\
&\quad \times \sum_{j=n-\log^2 n}^{n-1}(\phi_1(z)\overline{\phi_1(w)})^{j-n}\cdot \bigg[\frac {r_1^{2j+1}}{r_1^{2j+1}+r_2^{2j+1}(\frac {r_1}{r_2})^{2n}e^{-c}}\label{hugo} \\
&\qquad\qquad -\frac {\sqrt{\phi'_\tau(z)}\overline{\sqrt{\phi'_\tau(w)}}}
{\sqrt{\phi_1'(z)}\overline{\sqrt{\phi_1'(w)}}}\bigg(\frac {\phi_\tau(z)\overline{\phi_\tau(w)}}{\phi_1(z)\overline{\phi_1(w)}}\bigg)^j\frac {e^{\frac n 2 (q_\tau(z)+\overline{q_\tau(w)})}}{e^{\frac n 2(q_1(z)+\overline{q_1(w)})}}\frac {e^{\frac 1 2(h_\tau(z)+\overline{h_\tau(w)}}}{e^{\frac 1 2(h_1(z)+\overline{h_1(w)})}}\bigg].\nonumber
\end{align}

We shall next prove that 
\begin{align}\frac {\sqrt{\phi'_\tau(z)}\overline{\sqrt{\phi'_\tau(w)}}}
{\sqrt{\phi_1'(z)}\overline{\sqrt{\phi_1'(w)}}}\bigg(\frac {\phi_\tau(z)\overline{\phi_\tau(w)}}{\phi_1(z)\overline{\phi_1(w)}}\bigg)^j\frac {e^{\frac n 2 (q_\tau(z)+\overline{q_\tau(w)})}}{e^{\frac n 2(q_1(z)+\overline{q_1(w)})}}\frac {e^{\frac 1 2(h_\tau(z)+\overline{h_\tau(w)}}}{e^{\frac 1 2(h_1(z)+\overline{h_1(w)})}}=1+\bigO\bigg(\frac {\log^4 n} n\bigg).\label{tos}
\end{align}

This is a little technical, but fortunately the main estimates have already been carried out in \cite{AC1}.

Indeed, to show \eqref{tos}, we write $V_\tau(z)$ for the harmonic continuation of $\check{Q}_\tau(z)$ from $\C\setminus S_\tau$ inwards across $\d S_\tau$. It is well-known and easy to check that
$$|\phi_\tau(z)|^je^{\frac 2 n\re q_\tau(z)}=e^{\frac n 2 V_\tau(z)}.$$

Proceeding as in the proof of \cite[Lemma 4.4]{AC1} we define 
$$P_\tau(z):=\tau\log\left[\frac {\phi_\tau(z)}{\phi_1(z)}e^{\frac 1 {2\tau}(q_\tau-q_1)(z)}\right],$$
with the branch of the logarithm such that $P_\tau(\infty)$ is real. Note that $P_1\equiv 0$.

The proof of \cite[Lemma 4.4]{AC1} shows that there is a strictly positive constant $a$ such that
$$\re P_\tau(z)\le - a(1-\tau)^2$$
which leads to
$$\bigg|\bigg(\frac {\phi_\tau(z)\overline{\phi_\tau(w)}}{\phi_1(z)\overline{\phi_1(w)}}\bigg)^j\frac {e^{\frac n 2 (q_\tau(z)+\overline{q_\tau(w)})}}{e^{\frac n 2(q_1(z)+\overline{q_1(w)})}}\bigg|=e^{\frac n 2(\re P_\tau(z)+\re P_\tau(w))}\le e^{-a\frac {(n-j)^2} n}.$$
For $1\le n-j\le \log^2 n$ the right hand side is $1+\bigO(n^{-1}\log^4 n)$.

Inserting this asymptotic in \eqref{hugo} and rearranging we find
\begin{align*}\Sigma_{1,2}^{(n)}(z,w)&=\sqrt{\frac n {2\pi}}\sqrt{\phi_1'(z)}\overline{\sqrt{\phi_1'(w)}}(u(z)\overline{u(w)})^ne^{-\frac n 2(Q(z)+Q(w))}e^{\frac 1 2(h_1(z)+\overline{h_1(w)})}\\
&\times \sum_{j=1}^{\log^2 n} \frac 1 {(\phi_1(z)\overline{\phi_1(w)})^j}\cdot \bigg[\frac {e^{-c}r_2^{1-2j}}{r_1^{1-2j}+r^{-c}r_2^{1-2j}}+\bigO(n^{-1}\log^4 n)\bigg].
\end{align*} The lemma now follows from the representation \eqref{extend} of $S_{1,2}(z,w)$.
\end{proof}

The last two lemmas conclude our proof of Theorem \ref{mth0}. $\qed$

\section{Correlations between points in $\calB_1\cup\calB_2$} \label{corc}

We now prove Theorem \ref{mth1} and Theorem \ref{mcor1}. We start with the following lemma.

\begin{lem} \label{slugger} Let $V(z)$ be the harmonic continuation of $\check{Q}|_{\Ext C_1}$ inwards across $C_1$. For $p\in C_1\cup C_2$ set
\begin{equation}\label{zo0}z=p+\frac s {\sqrt{2n\Delta Q(p)}}\nu(p)\end{equation}
where $\nu(p)$ is the exterior unit normal. We then have the Taylor expansion
$$(Q-V)(z)=s^2+\bigO\big( |s|^3/\sqrt{n}\big)$$
where the implied constant is uniform for all $|s|\le M\sqrt{\log n}$.
\end{lem}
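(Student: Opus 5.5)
The plan is a second-order Taylor expansion of $Q-V$ about $p$ in the normal direction. Zeroth and first order cancel, and the second-order coefficient reduces to $2\Delta Q(p)$, which the scaling in \eqref{zo0} turns into $s^2$.

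\emph{Zeroth and first order.} First, $V$ is harmonic in a neighbourhood of $C_1\cup C_2$ (on the $C_1$ side this is the harmonic continuation, and near $C_2$ we have $V=\check{Q}$ by \eqref{fundid}, since $\Ext C_1\setminus S$ contains no mass of $\sigma$). Second, $Q$ is real-analytic near $C_1\cup C_2$ by assumption. So $f:=Q-V$ is real-analytic in a fixed neighbourhood of $C_1\cup C_2$, with bounds on its third derivatives uniform along the curves. At $p\in C_1\cup C_2\subset S^*$ we have $Q(p)=\check{Q}(p)=V(p)$. Hence $f\ge 0$ outside $S$ near $p$, since $Q\ge\check{Q}=V$ on $\Ext C_1$. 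Along $C_2$ this gives $f\ge 0$ on both sides, so $p$ is a local minimum and $\nabla f(p)=0$. Along $C_1$ I use the $C^{1,1}$-smoothness of $\check{Q}$: because $\check{Q}=Q$ on $S$ and $\check{Q}=V$ outside, the gradients match across $C_1$, so $\nabla Q(p)=\nabla V(p)$. In either case $f(p)=0$ and $\nabla f(p)=0$.

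\emph{Second order.} Let $\tau$ and $\nu$ be the unit tangent and normal at $p$. Since $f$ vanishes identically along the curve (for $C_1$ because $Q=V$ on $C_1$; for $C_2$ because $C_2\subset S^*$), its second tangential derivative, corrected by curvature times the normal derivative, vanishes. As the normal derivative is $0$, this gives $\partial_\tau^2 f(p)=0$. Therefore
\[
\partial_\nu^2 f(p)=\partial_\nu^2 f(p)+\partial_\tau^2 f(p)=4\Delta f(p)=4\Delta Q(p),
\]
using $\Delta V=0$ and that $\Delta$ is the normalized Laplacian.

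\emph{Conclusion.} Taylor's formula along the normal line gives
\[
f(p+\delta\nu)=\tfrac12\cdot 4\Delta Q(p)\,\delta^2+\bigO(|\delta|^3).
\]
With $\delta=s/\sqrt{2n\Delta Q(p)}$ the quadratic term equals exactly $s^2$. The remainder is $\bigO(|s|^3n^{-3/2})$, which is in particular $\bigO(|s|^3/\sqrt{n})$. Uniformity in $p$ and in $|s|\le M\sqrt{\log n}$ follows from compactness of the curves, the lower bound on $\Delta Q$ there, and the fact that $\delta\to0$ uniformly.

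\emph{Main obstacle.} The delicate step is justifying that $V$ is harmonic up to and across each curve, so that $f$ is smooth there. Near $C_1$ this needs the analyticity of $\phi_1$, $q_1$ across $C_1$ (via \eqref{fundid}); near $C_2$ it holds because $C_2$ lies in $\Ext C_1$, where $\check{Q}$ is harmonic. The vanishing of $\nabla f$ on $C_1$ rests on the $C^{1,1}$ matching of $\check{Q}$.
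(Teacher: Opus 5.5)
Your route is the same as the paper's: Taylor-expand $f=Q-V$ along the normal line at $p$, and use $\Delta V=0$ to get $\tfrac12\partial_\nu^2 f(p)=2\Delta Q(p)$. You also justify steps the paper leaves implicit: $f(p)=0$, $\nabla f(p)=0$, and $\partial_\tau^2 f(p)=0$ via the curvature identity. Those justifications are correct.

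On $C_1$ there is a cleaner way to get $\nabla f(p)=0$. Apply the same minimum argument you use on $C_2$ to the $C^1$ function $Q-\check{Q}\ge 0$, and combine it with $\nabla\check{Q}=\nabla V$ on the closure of $\Ext C_1$. This avoids needing interior points of $S$ accumulating at $p$.

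The problem is the final substitution. With $\delta=s/\sqrt{2n\Delta Q(p)}$,
\[
\tfrac12\cdot 4\Delta Q(p)\,\delta^2=2\Delta Q(p)\cdot\frac{s^2}{2n\Delta Q(p)}=\frac{s^2}{n},
\]
not $s^2$. Your Taylor argument therefore yields $(Q-V)(z)=s^2/n+\bigO(|s|^3n^{-3/2})$, that is,
\[
n(Q-V)(z)=s^2+\bigO\big(|s|^3/\sqrt{n}\big).
\]

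This is the intended content of the lemma. The displayed statement is missing a factor $n$ on the left, as its use in the proof of Theorem~\ref{mth1} shows. Since $|u|^2=e^{V}$, one has $|u(z)|^n e^{-\frac n2 Q(z)}=e^{-\frac n2(Q-V)(z)}$, and this must equal $e^{-\frac12 s^2}$ up to the error.

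Read literally, the identity you claim is false. For $|s|\le M\sqrt{\log n}$ the left side is $\bigO(\log n/n)$, while $s^2$ is not small (take $s=1$).

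Your sentence ``the remainder is $\bigO(|s|^3n^{-3/2})$, which is in particular $\bigO(|s|^3/\sqrt{n})$'' is exactly where the lost factor $n$ is hidden. $\bigO(|s|^3/\sqrt n)$ is the correct error term only after multiplying the whole expansion by $n$. You should state and prove the corrected version rather than force agreement with the misprint.
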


\begin{proof} Let $\d_\ttn$ denote differentiation in the exterior normal direction to $C_k$, where $p\in C_k$.
Using that $Q-V$ is real-analytic near $C_k$, Taylor's formula gives, as $\ell\to 0$, $\ell\in\R$
\begin{align*}(Q-V)(p+\ell\nu(p))&=\frac 1 2 \d_\ttn^2 (Q-V)(p)\cdot \ell^2+\bigO(\ell^3)\\
&=2\Delta (Q-V)(p)\cdot\ell^2+\bigO(\ell^3)\\
&=2\Delta Q(p)\cdot\ell^2+\bigO(\ell^3),\end{align*}
where we used that $V$ is harmonic.
\end{proof}

Assume next that both $p$ and $q$ are in $C_1\cup C_2$ and that $|\phi_1(p)\overline{\phi_1(q)}-1|\ge \eta>0$.

 Also fix $s,t$ with $|s|,|t|\le M\sqrt{\log n}$ and let $z$ and $w$ be as in \eqref{zoom}.

Using Lemma \ref{slugger}, we see that 
$$(u(z)\overline{u(w)})^ne^{-\frac n 2(Q(z)+Q(w))}=e^{-\frac 1 2(s^2+t^2)}\cdot \exp(\bigO((\log n)^{\frac 32}n^{-\frac 12})).$$

Inserting this in Theorem \ref{mth0}, we finish our proof of Theorem \ref{mth1}.

Next use Theorem \ref{mth1} and Lemma \ref{sluggo} to conclude that for $z$ as in \eqref{zo0} and $p\in C_2$,
\begin{align*}K_n(z,z)&=\sqrt{2\pi n}\,e^{-s^2}S_{1,2}(p,p)\cdot (1+\bigO(n^{-\beta}))\\
&=\sqrt{2\pi n}\, e^{-s^2}|\phi_2'(p)|\sqrt{\Delta Q(p)}\frac 1 {2\pi}\sum_{j=1}^\infty \frac {r_2^{1-2j}}{r_1^{1-2j}e^{-c}+r_2^{1-2j}}\cdot (1+\bigO(n^{-\beta})).
\end{align*} 
By \eqref{expheine} we have
$$\sqrt{\Delta Q(p)}\sum_{j=1}^\infty\frac {r_2^{1-2j}}{r_1^{1-2j}e^{-c}+r_2^{1-2j}}=\mu,$$
finishing the proof of Theorem \ref{mcor1}. q.e.d.

\section{Berezin measures and the expectation formula} \label{berem}
In this section we prove Theorem \ref{massthm} and Corollary \ref{chuck}.
We begin with Theorem \ref{massthm}.

Recall that the Berezin measure rooted at a given point $z\in \Ext C_1$ is the probability measure
$$d\mu_{n,z}(w)=\frac {|K_n(z,w)|^2}{K_n(z,z)}\, dA(w).$$

Let $\calB_1$ and $\calB_2$ be the belts about $C_1$ and $C_2$ respectively, see \eqref{2cho}. We start with the following lemma, which says that $\mu_{n,z}$ is essentially supported on the union of the belts.

\begin{lem} \label{stuck} For fixed $z\in\Ext C_1$ and any $N>0$ we have the convergence
\begin{align}\label{supp}\mu_{n,z}(\C\setminus (\calB_1\cup\calB_2))=\bigO(n^{-N}),\qquad (n\to\infty).
\end{align}
\end{lem}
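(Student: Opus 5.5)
We need to show $\int_{\C\setminus(\calB_1\cup\calB_2)}|K_n(z,w)|^2\,dA(w)=\bigO(n^{-N})\,K_n(z,z)$. The plan is to bound $K_n(z,z)$ from below and split the complement of the belts into three pieces.

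\emph{Lower bound for $K_n(z,z)$.} Since $z$ is fixed in $\Ext C_1$, Theorem \ref{mth0} applies with $w=z$; note that $|\phi_1(z)|^2-1\ge\eta$ for some fixed $\eta>0$. Together with \eqref{fest} this gives
\[
K_n(z,z)=\sqrt{2\pi n}\,e^{-n(Q-V)(z)}\,S_{1,2}(z,z)\,(1+o(1)),
\]
where $S_{1,2}(z,z)>0$. If $z\notin C_2$, then by \eqref{bock} $K_n(z,z)$ is exponentially small, of order $e^{-n(Q-\check Q)(z)}$. This factor must therefore cancel against the numerator. So I would not bound $|K_n(z,w)|$ by an absolute quantity but keep the factor $e^{-\frac n2(Q-V)(z)}$ explicitly.

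\emph{The three regions.} Split $\C\setminus(\calB_1\cup\calB_2)$ into:
\begin{itemize}
\item[(i)] $w\in\Ext C_1\setminus(\calB_1\cup\calB_2)$;
\item[(ii)] $w$ in the interior of $C_1$ and outside $\calB_1$;
\item[(iii)] $w$ far out near infinity.
\end{itemize}

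\emph{Region (i).} The condition \eqref{uni1} may fail for $w$ near $C_1$ with $\phi_1(w)$ close to $1/\overline{\phi_1(z)}$, but this is impossible for $w$ near $C_1$ when $|\phi_1(z)|>1$ is fixed. Hence \eqref{fest} gives
\[
|K_n(z,w)|^2\lesssim n\,e^{-n(Q-V)(z)}\,e^{-n(Q-V)(w)}\,|S_{1,2}(z,w)|^2 .
\]
Dividing by $K_n(z,z)$ leaves $\sqrt n\,e^{-n(Q-V)(w)}|S_{1,2}(z,w)|^2$. By \eqref{bock}, $(Q-V)(w)\ge cM^2\log n/n$, so this is at most $n^{1/2-cM^2}$ times a bounded kernel factor. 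For large $w$ the factor $S_{1,2}(z,w)=\bigO(|w|^{-2})$ (since $f(\infty)=0$ and $\phi_1'\sim 1/r_1$), and $(Q-V)(w)$ grows by the growth assumption on $Q$. Choosing $M$ large gives $\bigO(n^{-N})$ after integration.

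\emph{Region (ii).} Here \eqref{fest} is not available, so I would use a Bergman-type argument instead. Write $|K_n(z,w)|^2\le K_n(z,z)K_n(w,w)$ and use the standard bound $K_n(w,w)\lesssim n\,e^{-n(Q-\check Q)(w)}$. This only gives smallness where $Q>\check Q$, that is, inside the gaps of $S^*$. For $w$ in the interior of the droplet it fails: there $K_n(w,w)\asymp n$. Thus Cauchy--Schwarz is too weak for interior points, and this is the main obstacle.

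I would handle (ii) with the wavefunction expansion. Write $K_n(z,w)=\sum_j e_{j,n}(z)\overline{e_{j,n}(w)}$. By Lemma \ref{zoff}, the terms with $j\le n-C\sqrt{n\log n}$ are $\bigO(n^{-N})$ at $z$, but this must be measured relative to $K_n(z,z)$, which is of order $\sqrt n\,e^{-n(Q-\check Q)(z)}$. Better, the proof of Lemma \ref{zoff} gives the relative bound
\[
|e_{j,n}(z)|\lesssim e^{-\frac n2(Q-\check Q_\tau)(z)} .
\]
Since $\check Q_\tau\le\check Q-c(1-\tau)^2$ at fixed $z\in\Ext C_1$, with $\tau=j/n$, these terms are $n^{-N}e^{-\frac n2(Q-\check Q)(z)}$. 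Their contribution to $\int|K_n(z,\cdot)|^2$ is $\sum_j|e_{j,n}(z)|^2$ by orthonormality, so it is relatively $\bigO(n^{-N})$.

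For the remaining indices $j\ge n-C\sqrt{n\log n}$, use Lemmas \ref{classlem} and \ref{usel}. These show that $e_{j,n}$ is approximated, including inside $C_1$ away from $\calB_1$, by $E_{j,n}$ or $F_{j,n}$. The pointwise bound $|e_{j,n}(w)|\lesssim n^{1/4}e^{-\frac n2(Q-\check Q_\tau)(w)}$ holds, and this is $\bigO(n^{-N})$ on $\Int C_1\setminus\calB_1$ for these $j$, since $S_\tau$ lies within $\bigO(\sqrt{\log n/n})$ of $C_1$. Combining with Cauchy--Schwarz over the $\bigO(\sqrt{n\log n})$ indices, and noting that $\sum_j|e_{j,n}(z)|^2$ over this range is comparable to $K_n(z,z)$, gives (ii).
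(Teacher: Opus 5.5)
The gap is in the high-index part of region (ii).

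Your region (i) argument is essentially the paper's: it is what Corollary \ref{qm} amounts to via \eqref{fest} and \eqref{bock}. You are right to keep the factor $e^{-\frac n2(Q-V)(z)}$ so that the bound is relative to $K_n(z,z)$. One small slip: $S_{1,2}(z,w)=\bigO(|w|^{-1})$, not $\bigO(|w|^{-2})$, so integrability at infinity must come from the growth of $Q-V$, which you do invoke. The low-index half of region (ii) is also sound. Orthonormality gives $\int_\C\bigl|\sum_{j\,\mathrm{low}}e_{j,n}(z)\overline{e_{j,n}(w)}\bigr|^2dA(w)=\sum_{j\,\mathrm{low}}|e_{j,n}(z)|^2$, and at the fixed exterior point $z$ each term is relatively negligible because $\check Q_\tau(z)$ falls below $\check Q(z)$.

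After Cauchy--Schwarz you need $\sum_{j\ge n-C\sqrt{n\log n}}\int_{\Int C_1\setminus\calB_1}|e_{j,n}|^2\,dA=\bigO(n^{-N})$. You justify this by $|e_{j,n}(w)|\lesssim n^{1/4}e^{-\frac n2(Q-\check Q_\tau)(w)}$, but that bound gives nothing here:

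\begin{itemize}
\item $\check Q_\tau=Q$ on $S_\tau$.
\item For these $j$, the droplet $S_\tau$ differs from $S$ only by a layer of width $\bigO(\sqrt{\log n/n})$ along $C_1$.
\item Hence $\Int C_1\setminus\calB_1$ lies, apart from holes of $S$, in the interior of $S_\tau$, where your bound gives no decay at all.
\end{itemize}

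The closeness of $\d S_\tau$ to $C_1$ makes $e^{-\frac n2(Q-\check Q_\tau)}$ small \emph{outside} $S_\tau$, not inside. This is exactly the defect you yourself diagnosed for $K_n(w,w)\lesssim n\,e^{-n(Q-\check Q)(w)}$, and splitting into individual wavefunctions does not cure it.

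Lemmas \ref{classlem} and \ref{usel} do not help either. They are stated only for $w\in\calB_1\cup\Ext C_1$. Moreover, $E_{j,n}$ and $F_{j,n}$ are built from exterior conformal maps and exterior solutions $q_\tau,h_\tau$, which need not continue deep into $\Int C_1$.

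What is missing is a genuine interior or off-diagonal decay input. You would need either that edge-regime wavefunctions carry only $\bigO(n^{-N})$ mass in $\Int C_1\setminus\calB_1$, or directly that $|K_n(z,w)|^2\le C_Nn^{-N}e^{-n(Q-\check Q)(z)}$ for such $w$. These statements are true, but they do not follow from the trivial pointwise bound or from the exterior approximation lemmas. The paper takes the second bound from the off-diagonal estimate \cite[Theorem 8.1]{CPAM}. It then divides by $K_n(z,z)\asymp\sqrt n\,e^{-n(Q-\check Q)(z)}$, which comes from Theorem \ref{mth0}. Without an ingredient of this kind, your argument for region (ii) does not close.
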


\begin{proof}
By Corollary 
\ref{qm}, we infer that  (taking the constant $M$ large enough) $\mu_{n,z}((\Ext C_1)\setminus(\calB_1\cup\calB_2))=\bigO(n^{-N})$.
It remains to consider the measure of the domain $I_n=(\Int C_1)\setminus \calB_1$. For this, it is convenient to invoke the off-diagonal estimate in \cite[Theorem 8.1]{CPAM}, which implies that we can, by choosing $M$ large enough, ensure that 
\begin{align}\label{comb1}|K_n(z,w)|^2\le C_Nn^{-N}e^{-n(Q-\check{Q})(z)},\qquad (w\in I_N).\end{align}
On the other hand, by Theorem \ref{mth0} we have that 
\begin{align}\label{comb2}K_n(z,z)=\sqrt{2\pi n}\cdot e^{-n(Q-\check{Q})(z)}\cdot S_{1,2}(z,z)\cdot (1+\bigO(n^{-\beta})).\end{align}
Combining \eqref{comb1} and \eqref{comb2} we see that $\mu_{n,z}(I_n)=\bigO(n^{\frac 1 2-N})$.
\end{proof}

Now parameterize $\calB_2$ by $(q,t)$,
\begin{equation}\label{cov}w=w(q,t):=q+\frac t{\sqrt{2n\Delta Q(q)}}\cdot \nu(q),\qquad (q\in\d S,\, |t|\le M\sqrt{\log n}).\end{equation}

A computation of the Jacobian (see \cite[Lemma 3.1]{A}) shows that
$$dA(w)=\frac 1 \pi \frac 1 {\sqrt{2n\Delta Q(q)}}\cdot \bigg(1+\frac {t\kappa(q)}{\sqrt{2n\Delta Q(q)}}\bigg)\, dt\, ds(q),\qquad (w=w(q,t)),$$
where $\kappa(q)=\d_\tts \arg \nu(q)$ is the signed curvature of $\d S$.

A similar relation holds for $w\in\calB_1$.

Let $f(w)$ an arbitrary function in $H_{1,2}$ which extends continuously to the boundary $\d U$. Extend $f$ to a continuous function on $\hat{\C}$ in some way. 

Using Theorem \ref{mth1} and Lemma \ref{stuck} we have
\begin{align*}\int_\C &f\,d\mu_{n,z}=\sum_{k=1}^2\int_{\calB_k}f\, d\mu_{n,z}+\bigO(n^{-N})\|f\|_\infty\\
&=\sum_{k=1}^2\sqrt{2\pi n}\frac 1 \pi \int_{-M\sqrt{\log n}}^{M\sqrt{\log n}}f(w)\, e^{-t^2}\, dt\cdot (1+\bigO(n^{-\beta}))\\
&\qquad\times\oint_{C_k}\sum_{k=1,2}\sqrt{\Delta Q(q)}\frac {|S_{1,2}(z,q)|^2}{S_{1,2}(z,z)}\frac 1 {\sqrt{2n\Delta Q(q)}}\cdot \bigg(1+\frac {t\kappa(q)}{\sqrt{2n\Delta Q(q)}}\bigg)\, ds(q)\\
&\qquad\qquad \qquad +\|f\|_\infty\cdot \bigO(n^{-N})\\
&\qquad=\sum_{k=1,2}\oint_{C_k}f(q)\frac {|S_{1,2}(z,q)|^2}{S_{1,2}(z,z)}\, ds(q)+\bigO(n^{-N})\|f\|_\infty.
\end{align*}

Denote
$$b_z^{(k)}(f):=\oint_{C_k} f(q)\frac {|S_{1,2}(z,q)|^2}{S_{1,2}(z,z)}\, ds(q),\qquad (k=1,2).$$

Since $q\mapsto f(q)S_{1,2}(z,q)$ is in $H_{1,2}$, the reproducing property of $S_{1,2}$ gives
$$b_z^{(1)}(f)+b_z^{(2)}(f)=f(z).$$

Now fix $z\in U$ and write $r=|\phi_2(z)|$, so $r>r_1/r_2$. We can without loss assume that $\phi_2(z)$ is real. Then
\begin{align*}S_{1,2}(z,w)&=\frac 1 {2\pi}\sqrt{\phi_2'(z)}\overline{\sqrt{\phi_2'(w)}}e^{\frac 1 2(h_2(z)+\overline{h_2(w)})}\sum_{j=1}^\infty \frac
1 {(\overline{\phi_2(w)})^j}\frac {r^{-j}r_2^{1-2j}}
{r_1^{1-2j}e^{-c}+r_2^{1-2j}}
\end{align*}
whence, on setting $\phi_2(q)=e^{i\theta}$, $ds(q)=|\phi_2'(q)|^{-1}\, d\theta$,
\begin{align*}\oint_{C_2} |S_{1,2}(z,q)|^2\, ds(q)&=\frac{|\phi_2'(z)|}{4\pi^2}\int_0^{2\pi}\bigg|\sum_{j=1}^\infty \frac {r^{-j}r_2^{1-2j}}
{r_1^{1-2j}e^{-c}+r_2^{1-2j}} e^{ij\theta}\bigg|^{\,2}\, d\theta\\
&=\frac {|\phi_2'(z)|}{2\pi}\sum_{j=1}^\infty\bigg(\frac {r^{-j}r_2^{1-2j}}
{r_1^{1-2j}e^{-c}+r_2^{1-2j}}\bigg)^{\,2},
\end{align*}
where the last equality uses Parseval's theorem.

Since
$$S_{1,2}(z,z)=\frac {|\phi_2'(z)|} {2\pi}\sum_{j=1}^\infty r^{-2j}\frac {r_2^{1-2j}}{r_1^{1-2j}e^{-c}+r_2^{1-2j}}$$
we obtain the result that
\begin{align*}b_z^{(2)}(1)=\frac {\sum_{j=1}^\infty r^{-2j}\bigg(\frac { r_2^{1-2j}}
{r_1^{1-2j}e^{-c}+r_2^{1-2j}}\bigg)^{\,2}}{\sum_{j=1}^\infty r^{-2j}\frac {r_2^{1-2j}}{r_1^{1-2j}e^{-c}+r_2^{1-2j}}}.
\end{align*}
This finishes the proof of Theorem \ref{massthm}. q.e.d.

\smallskip

We finally prove Corollary \ref{chuck}.
Using Theorem \ref{mcor1} and the change of variables \eqref{cov}, we obtain
\begin{align*}\int_{\calB_2}K_n(z,z)\, dA(z)&=\frac 1 \pi\oint_{C_2}\sqrt{\frac {n\Delta Q(p)}{2\pi}}\cdot \mu\cdot|\phi_2'(p)|\,\frac {|dp|}{\sqrt{2n\Delta Q(p)}}\\
&\qquad \qquad \times\int_{-M\sqrt{\log n}}^{M\sqrt{\log n}}e^{-t^2}\, dt\cdot(1+\bigO(n^{-\beta}))\\
&=\frac \mu {2\pi}\int_\T|dz|\cdot (1+\bigO(n^{-\beta}))=\mu\cdot (1+\bigO(n^{-\beta})).
\end{align*}
The proof is complete. q.e.d.

\subsection*{Acknowledgment} The authors are grateful to Joakim Cronvall for discussions and much appreciated help.

\end{document}